\DeclareMathOperator{\dist}{dist}
\DeclareMathOperator{\sgn}{sgn}
\DeclareMathOperator{\spt}{supp}
\DeclareMathOperator\Id{Id}
\newcommand{\eps}{\varepsilon}
\newcommand{\Chi}{\mathcal{X}}
\newcommand{\R}{\ensuremath{\mathbb{R}}}
\newcommand{\N}{\ensuremath{\mathbb{N}}}
\newcommand{\Ha}{\ensuremath{\mathcal{H}}}
\def\R{\mathbb R}
\def\Bc{\mathcal{B}}
\def\Beps{\mathcal{B}_\eps}
\def\F{\mathcal{F}}
\theoremstyle{plain}
\numberwithin{equation}{section}
\newtheorem{lemma}{Lemma}[section]
\newtheorem{proposition}[lemma]{Proposition}
\theoremstyle{definition}
\begin{document}
\title[]{Confined elastic curves}
\author{Patrick W.~Dondl}
\address{Patrick W.~Dondl, Institut f\"ur Angewandte Mathematik,
Universit\"at Bonn,
Endenicher Allee 60,
D-53119 Bonn}
\email{pwd@hcm.uni-bonn.de}

\author{Luca Mugnai}
\address{Luca Mugnai, Max Planck Institute for Mathematics in the
  Sciences, Inselstr. 22, D-04103 Leipzig}
\email{mugnai@mis.mpg.de}

\author{Matthias R{\"o}ger}
\address{Matthias R{\"o}ger, Technische Universit\"at Dortmund,
Fakult\"at fŸr Mathematik,
Vogelpothsweg 87,
D-44227 Dortmund}
\email{matthias.roeger@tu-dortmund.de}

\subjclass[2000]{49Q10; 74G65}

\keywords{Elastica energy; topological constraint; phase field; calculus of variations; subdivision finite elements}

\date{\today}

\begin{abstract}
We consider the problem of minimizing Euler's elastica energy for simple closed curves confined to the unit disk. We approximate a simple closed curve by the zero level set of a function with values $+1$ on the inside and $-1$ on the outside of the curve. The outer container now
becomes just the domain of the phase field. Diffuse approximations of the elastica energy and the curve length are well known. Implementing the topological constraint thus becomes the main difficulty here. We propose a solution based on a diffuse approximation of the winding number, present a proof that one can approximate a given sharp interface using a sequence of phase fields, and show some numerical results using finite elements based on subdivision surfaces.
\end{abstract}

\maketitle
%tableofcontents
%==========================================
\section{Introduction}
%==========================================
\label{sec:intro}
Elastic structures confined to a certain volume or area appear in many situations. For example inner membranes in biological cells separate an inner region from the rest of the cell and consist of an elastic bilayer. The inner structures are confined by the outer cell membrane. Since the inner membrane contributes to the biological function it is advantageous to include a large membrane area in the cell. In two dimensions elastic structures confined to a plane ball have been experimentally produced by Bou\'{e} \emph{et alii} \cite{BABC06} (see also \cite{DGS02, SWH08}). They show that with increasing length the structures become more and more complex. 
We are considering here the problem corresponding to a one-dimensional closed elastic wire constrained in a  two-dimensional container of circular shape. More precisely we consider a wire  whose equilibrium (i.e. stress-, strain-free) configuration is given by a circle of radius $L/2\pi$, and we suppose that  both the friction between the wire and the boundary of the container,  and the friction between  portions of the wire that are in contact are negligible. We are interested in finding (stable) shapes  of the folded wire constrained in the container. More precisely we are interested in those shapes that are obtainable via pure bending deformation processes starting from the equilibrium configuration (in particular no stretching is allowed). 

We adopt the following mathematical description of the problem. We represent the container by the closed unit disk $\overline{B_1(0)}:=\{z\in\R^2:~\vert z\vert\leq 1\}$ and the folded (isotropic) elastic wire by an immersion $\gamma:S^1_L\,\to\,\overline{B_1(0)}$ of the circle $S^1_L$ of radius $L/2\pi$ in the unit closed disk. As for the bending elastic energy, we consider  the classical Euler's elastica energy  associated to the immersion $\gamma$. The configurations we are interested in  correspond to the (local) minimizers of the bending energy among the closed curves that are supported in the unit ball, and that can be reached via a deformation path that starts from the circle $S^1_L$, and along which the following three constraints are fulfilled: the length of the immersed curve remains equal to $L$ (so that we exclude stretching of the wire);   the elastic (bending) energy is uniformly bounded; the immersed curve may have multiple self-contact points, but does not have ``self-crossings'' (as this would correspond to  self-interpentration of the wire). It turns out (see Section \ref{sec:sharp-interf}) that the class of immersed curves satisfying the above constrains corresponds to the closure (with respect to the $W^{2,2}$-weak topology) of length-preserving diffeomorphisms of $S^1_L$ into $B_1(0)$. 
 In this formulation there are several intrinsic difficulties. Minimizers (for large prescribed length) are expected to have multiple touching points. Therefore the associated Euler-Lagrange equation  involves several Lagrange multipliers and an explicit characterization of the class of curves in which the minimum  is attained is difficult to  obtain. Furthermore the constraints of being confined to the unit ball and of not developing ``self-crossings'' are difficult to maintain in a steepest descent method.

In this paper we propose a \emph{phase field approximation} of the above problem. We justify our approach by an asymptotic analysis and investigate the problem by numerical simulations. As we already remarked above, in the original \emph{sharp interface} formulation admissible configurations correspond to immersions that can be approximated by a sequence of simple and closed curves.  Since simple and closed curves  bound an inner set we can approximate such sets by smooth fields with values close to $+1$ inside and close to $-1$ outside. Prescribing the confinement condition is now  rather simple: the outer container just becomes the domain of definition for the approximating phase fields and a boundary condition ensures that emerging structures do not leave the domain. An approximation of Euler's elastica energy is well known, implementing the topological condition thus becomes the main difficulty. One neccessary condition is that the phase field approximation of the winding number has to be close to $2\pi$. We will use a gradient flow for a relaxed diffuse approximation of the elastica energy that includes \emph{soft constraints} for the prescribed length and for the winding number. For ``generic situations'' we observe that this is sufficient to keep the right topology,  and avoid that phase interfaces cross transversally. However, in general this method does not exclude that a phase disconnects into several pieces.  To deal with this issue we show that an additional variable can detect multiple components and can be used to prevent structures from disconnecting.

Let us remark that the same phase-fields approximation  we use in this paper for the bending energy, has been successfully used in similar contexts (\textit{e.g.} \cite{DuWang04, DuWang05,  DuWang07, CampHern, CampHern2}). The main differences between our results and the previous literature are on the one hand the numerical methods we develop to solve the (diffuse interface) gradient flow, on the other hand the inclusion of the topological constraint in the energy.

The plan of the paper is the following. In Section \ref{sec:sharp-interf} we discuss the constrained minimization problem in its sharp interface formulation. In Section \ref{sec:di} we introduce the diffuse interface approximation. In Section \ref{sec:limsup} we will prove that we can approximate a given sharp interface configuration with a sequence of phase fields.  In Section \ref{sec:numerics} numerical simulations are presented that show that our approach works reasonably well for ``well-behaved'' initial data. A more exotic example shows that our topological constraint is in general not sufficient to enforce the correct topology for phase boundaries. In the Appendix we therefore propose an improved formulation for the constrained problem and indicate why this will lead to the correct result. 
 
%=================================
%subsection*{Acknowledgment}
%=================================
%
%==========================================
\section{The sharp interface minimization problem}
%==========================================
\label{sec:sharp-interf}
We first discuss the minimization problem in its sharp interface formulation. Consider the unit ball $B_1(0)\subset\R^2$, a given length constraint $L>0$, and define the following class of admissable curves
\begin{gather}\label{eq:def-ML}
  M_L\,:=\, \Big\{ \gamma:[0,L]\to B_1(0),\, \gamma \text{ is a  closed and
    simple  }C^2\text{-curve},\,|\gamma'|=1\Big\}.
\end{gather}
In particular, elements of $M_L$ can be represented by $C^2$-diffeomorphisms of the standard sphere.
For $\gamma\in M_L$ Euler's elastica energy is given by 
\begin{gather}
  \Bc(\gamma)\,:=\, \int_0^L |\gamma''(s)|^2\,ds. \label{eq:def-B}
\end{gather}
We then consider the constrained minimization
problem: find the optimal value
\begin{gather}
  m_L\,:=\, \inf_{\gamma\in M_L} \Bc(\gamma), \label{eq:def-mL}
\end{gather}
and characterize minimal sequences and possible limit points. Since we expect touching points for the optimal structures, minimizers will in general not belong to the class $M_L$. However, we do obtain the following compactness property.
\begin{proposition}\label{prop:si-cpct}
Let $(\gamma_k)_{k\in\N}$ be a minimal sequence in $M_L$. Then there exists $\gamma\in W^{2,2}([0,L])$, such that
\begin{gather}
  \gamma_k\,\to\,\gamma \text{ weakly in }W^{2,2}([0,L]) \label{eq:conv-gamma-weak}
 \end{gather}
for a subsequence $k\to\infty$. The curve $\gamma$ has the following properties: 
 \begin{gather}
    \gamma \text{ is } C^1-\text{closed,} \label{eq:pr1}\\
  \gamma([0,L]) \,\subset\, \overline{B_1(0)}, \label{eq:pr2}\\
  |\gamma'(s)|\,=\, 1 \text{ for all }s\in[0,L], \label{eq:pr3}\\
  \gamma \text{ can touch the unit circle only tangentially}, \label{eq:pr4}\\
  \gamma \text{ has no transversal crossings}, \label{eq:pr5}
\end{gather}
where the last property means that $\gamma$ can touch itself only tangentially. Furthermore,  if we denote by $\Chi_k:\R^2\,\to\, \{0,1\}$ the characteristic function of the open subset of $B_1(0)$ that is enclosed by $\gamma_k$ we obtain that
\begin{gather}
  \Chi_k\,\to\, \Chi \text{ strongly in }L^1(\R^2). \label{eq:conv-Chi}
\end{gather}
The limit characteristic function $\Chi$ has the following properties:
\begin{gather}
  \Chi \,=\, \Chi_E, \text{ where }E\subset \overline{B_1(0)}\text{ is a set of finite perimeter}, \label{eq:pr6}\\
  \partial^*E \,\subset\, \spt(\gamma). \label{eq:pr7}
\end{gather}
Finally, $\gamma$ lies always on the same side of $E$: after changing the orientation of $\gamma$ if neccessary,
\begin{gather}
  \nu_E(x)\,=\, \sum_{\gamma(s)=x} \gamma'(s)^\perp \quad\text{ for }\Ha^1\text{-almost all }x\in \partial^*E, \label{eq:pr8}
\end{gather}
where in the last equation $\perp$ denotes the clockwise rotation by $\pi/2$ and $\nu_E$ the inner unit normal of $E$.
\end{proposition}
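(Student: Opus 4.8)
The plan is to extract one subsequence realising both \eqref{eq:conv-gamma-weak} and \eqref{eq:conv-Chi} and then to read off the listed properties; only the absence of transversal crossings \eqref{eq:pr5} and the normal formula \eqref{eq:pr8} require real work. First I would fix, for each $k$, the orientation of $\gamma_k$ so that $\gamma_k'(s)^\perp$ equals the inner unit normal of the enclosed region $E_k:=\{\Chi_k=1\}$ at $\gamma_k(s)$ (possible since a simple closed curve separates the plane). Since the $\gamma_k$ are parametrised by arclength, take values in $B_1(0)$, and satisfy $\Bc(\gamma_k)=\|\gamma_k''\|_{L^2([0,L])}^2\le C$ along the minimising sequence, they are bounded in $W^{2,2}([0,L])$; reflexivity together with the compact embedding $W^{2,2}([0,L])\hookrightarrow C^1([0,L])$ yields a subsequence with $\gamma_k\to\gamma$ weakly in $W^{2,2}$ and strongly in $C^1$. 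Simultaneously $\Chi_k=\Chi_{E_k}$ is bounded in $\BV(\R^2)$, the $E_k$ lying in $\overline{B_1(0)}$ with $P(E_k)=\Ha^1(\gamma_k([0,L]))=L$; after a further subsequence $\Chi_k\to\Chi_E$ in $L^1(\R^2)$ with $E\subset\overline{B_1(0)}$ of finite perimeter ($P(E)\le\liminf_k P(E_k)=L$), which gives \eqref{eq:conv-Chi} and \eqref{eq:pr6}.

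Next I would read off \eqref{eq:pr1}--\eqref{eq:pr4} by passing to the limit in the $C^1$-convergence: $\gamma_k(0)=\gamma_k(L)$ and $\gamma_k'(0)=\gamma_k'(L)$ give $C^1$-closedness of $\gamma$; $\gamma_k([0,L])\subset B_1(0)$ gives \eqref{eq:pr2}; $|\gamma_k'|\equiv 1$ gives \eqref{eq:pr3}; and at a point $s_0$ with $|\gamma(s_0)|=1$ the $C^1$ function $s\mapsto|\gamma(s)|^2$ attains its maximum over $S^1_L$, so (using $C^1$-closedness if $s_0\in\{0,L\}$) its derivative vanishes there, i.e.\ $\gamma'(s_0)\cdot\gamma(s_0)=0$, whence $\gamma$ is tangent to $\partial B_1(0)$, which is \eqref{eq:pr4}. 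For \eqref{eq:pr5} I would argue by contradiction: if $\gamma(s_1)=\gamma(s_2)$ with $s_1\not\equiv s_2\pmod L$ and $\gamma'(s_1),\gamma'(s_2)$ linearly independent, then $(t,\tau)\mapsto\gamma(s_1+t)-\gamma(s_2+\tau)$ is a $C^1$ map with a zero at the origin and invertible differential there, so by stability of nondegenerate zeros under $C^1$-perturbation the analogous maps built from $\gamma_k$ vanish at some $(t_k,\tau_k)\to(0,0)$ for $k$ large; shrinking the neighbourhood so that $s_1+t_k\not\equiv s_2+\tau_k\pmod L$, this is a self-intersection of $\gamma_k$, contradicting simplicity. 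Finally, for \eqref{eq:pr7}: if $x\notin\spt(\gamma)=\gamma([0,L])$, then $\gamma([0,L])$ being compact, a fixed ball around $x$ is disjoint from $\gamma_k([0,L])=\partial E_k$ for large $k$, hence lies entirely inside or entirely outside $E_k$, so by $L^1$-convergence $x$ has $E$-density $0$ or $1$ and cannot belong to $\partial^*E$.

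The main step is \eqref{eq:pr8}. Here I would test the Gauss--Green identity for $E_k$ against $\phi\in C^1_c(\R^2;\R^2)$: with the chosen orientation of $\gamma_k$ it reads
\begin{gather*}
  \int_{E_k}\dive\phi\,dx\,=\,-\int_0^L\phi(\gamma_k(s))\cdot\gamma_k'(s)^\perp\,ds .
\end{gather*}
Letting $k\to\infty$, the left side tends to $\int_E\dive\phi\,dx=-\int_{\partial^*E}\phi\cdot\nu_E\,d\Ha^1$ by $L^1$-convergence of $\Chi_k$, and the right side to $-\int_0^L\phi(\gamma(s))\cdot\gamma'(s)^\perp\,ds$ by $C^1$-convergence of $\gamma_k$. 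The area formula for the Lipschitz curve $\gamma$ --- whose multiplicity $\theta(x):=\#\{s:\gamma(s)=x\}$ is $\Ha^1$-summable, with $\int\theta\,d\Ha^1=L$, hence finite for $\Ha^1$-a.e.\ $x$ --- rewrites this as $\int_{\gamma([0,L])}\phi(x)\cdot\big(\sum_{\gamma(s)=x}\gamma'(s)^\perp\big)\,d\Ha^1(x)$. As $\phi$ is arbitrary, the $\R^2$-valued measures $\nu_E\,\Ha^1\lfloor\partial^*E$ and $\big(\sum_{\gamma(s)=x}\gamma'(s)^\perp\big)\,\Ha^1\lfloor\gamma([0,L])$ coincide; since $\Ha^1\lfloor\partial^*E$ and $\Ha^1\lfloor(\gamma([0,L])\setminus\partial^*E)$ are mutually singular, this forces $\sum_{\gamma(s)=x}\gamma'(s)^\perp=\nu_E(x)$ for $\Ha^1$-a.e.\ $x\in\partial^*E$ and $\sum_{\gamma(s)=x}\gamma'(s)^\perp=0$ for $\Ha^1$-a.e.\ $x\in\gamma([0,L])\setminus\partial^*E$, which --- after reversing the orientation of $\gamma$ should the initial normal convention have been the opposite one --- is exactly \eqref{eq:pr8}.

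I expect the genuine difficulty to be concentrated in this last identity of vector measures: a priori arbitrarily many strands of $\gamma$ may overlap on a set of positive $\Ha^1$-measure, and it is not evident that their tangent contributions cancel in pairs off $\partial^*E$ while summing to a single unit normal along $\partial^*E$; the mutual-singularity observation makes this automatic once the passage to the limit in Gauss--Green and the area-formula rewriting are secured, so the real effort lies in justifying those steps carefully, together with the topological stability argument behind \eqref{eq:pr5}. Everything else is a routine application of reflexive weak compactness, $\BV$-compactness, and the structure theory of sets of finite perimeter.
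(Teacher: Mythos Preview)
Your proposal is correct and follows essentially the same approach as the paper: weak $W^{2,2}$-compactness plus Sobolev embedding for the curves, $\BV$-compactness for the enclosed sets, and passage to the limit in the Gauss--Green identity for \eqref{eq:pr8}. In fact your write-up is more complete than the paper's: the paper simply asserts that the $C^{1,\alpha}$-convergence ``implies'' \eqref{eq:pr1}--\eqref{eq:pr5} without spelling out the transversal-crossing argument you give, and it stops at the distributional identity $\int \nu_E\cdot\eta\,d|\nabla\Chi|=\int_0^L\gamma'(s)^\perp\cdot\eta(\gamma(s))\,ds$ without invoking the area formula or the mutual-singularity step you use to extract the pointwise statement \eqref{eq:pr8}.
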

\begin{proof}
By the minimizing property of $\gamma_k$ we have that there exists $\Lambda>0$ such that
\begin{gather*}
 \Bc(\gamma_k) \,\leq\, \Lambda.
\end{gather*}
We moreover assume that all $\gamma_k$ are parametrized by arclength. Since $\gamma$ maps to the unit ball we therefore have a uniform bound of the sequence $(\gamma_k)_{k\in\N}$ in $W^{2,2}(0,L)$. We therefore deduce \eqref{eq:conv-gamma-weak} and by
Sobolev embedding Theorem that $\gamma\in C^{1,1/2}([0,L])$ and that
\begin{align}
  \gamma_{k}\,&\to\, \gamma \quad\text{ strongly in }C^{1,\alpha}([0,L]) \text{ for all }0\leq\alpha<\frac{1}{2}. \label{eq:conv-gamma-strong}
\end{align}
This also implies that \eqref{eq:pr1}-\eqref{eq:pr5} holds.

For the inner sets we have a uniform area bound by the confinement constraint and a uniform bound on the perimeter, that has length $L$ by the length constraint. Therefore $\Chi_k$ is uniformly bounded in $BV(\R^2)$ und we deduce that for a subsequence \eqref{eq:conv-Chi} holds and that $\Chi$ is the characteristic set $E$ satisfying \eqref{eq:pr6},\eqref{eq:pr7}. 

Finally we can orient all $\gamma_k$ such that $\gamma'(s)^\perp$ equals the inner normal of the set that is enclosed by $\gamma_k$.  Then we obtain for any function $\eta\in C^1_c(\R^2)$ from the Gau{\ss} Theorem and by \eqref{eq:conv-gamma-strong} that
\begin{align*}
  -\int_{\R^2} \nu_E(x)\cdot \eta(x)\,d|\nabla\Chi|(x) \,&=\, \int_{\R^2} \Chi(x)\nabla\cdot\eta(x)\,dx\\
  &=\, \lim_{k\to\infty}  \int_{\R^2} \Chi_k(x)\nabla\cdot\eta(x)\,dx\\
  &=\,-\lim_{k\to\infty} \int_0^L \gamma_k'(s)^\perp\cdot\eta(\gamma_k(s))\,ds\\
  &=\, \int_0^L \gamma'(s)^\perp\cdot\eta(\gamma_k(s))\,ds,
\end{align*}
which proves \eqref{eq:pr8} since $\eta$ was arbitrary.
\end{proof}
Proposition \ref{prop:si-cpct} in particular shows that minimizers of \eqref{eq:def-mL} belong to the closure $\overline{M_L}$ of $M_L$ with respect to the weak-$W^{2,2}([0,L])$-topology.  For our purposes, however, we only need the following alternative characterization of $\overline{M_L}$: curves in $\overline{M_L}$ can be approximated \emph{strongly} in $W^{2,2}$ by closed simple curves that are strictly contained in the unit ball.
\begin{proposition}\label{prop:alt-char}
For any $\gamma\in \overline{M_L}$ there exists a sequence $(\gamma_k)_{k \in\N}$ of simple closed $C^2$-curves with 
\begin{gather}
  \gamma_k\,\to\, \gamma\quad\text{ as }k\to\infty\text{ strongly in }W^{2,2}([0,L]), \label{eq:appro1}\\
  |\gamma_k'|(s)\,=\, 1 \quad\text{ for all }s\in [0,L], \label{eq:appro2}\\
  \gamma_k([0,L])\,\subset\, B_1(0). \label{eq:appro3}
\end{gather}
\end{proposition}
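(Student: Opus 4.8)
The plan is to pass to the tangent angle of $\gamma$, approximate by mollification, and then correct, one at a time, the three properties \eqref{eq:appro1}--\eqref{eq:appro3} that a naive mollification destroys.

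\emph{Reduction to the tangent angle.} Any $\gamma\in\overline{M_L}$ is a weak $W^{2,2}$-limit of a bounded sequence in $M_L$, so the reasoning in the proof of Proposition \ref{prop:si-cpct} applies and shows that $\gamma$ is unit speed and $C^1$-closed, that $\gamma([0,L])\subset\overline{B_1(0)}$, that $\gamma$ touches $\partial B_1(0)$ and itself only tangentially, and that it bounds a set $E$ of finite perimeter from a fixed side in the sense of \eqref{eq:pr8}. Write $\gamma'(s)=(\cos\theta(s),\sin\theta(s))$ with a continuous lift $\theta\in W^{1,2}([0,L])$; the turning number is $\pm1$, i.e.\ $\theta(L)-\theta(0)=\pm2\pi$, and after reorienting $\gamma$ we may assume $\theta(L)=\theta(0)+2\pi$, so that $s\mapsto\gamma'(s)$ surjects onto the unit circle. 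Identifying $\R^2\cong\mathbb C$, a curve $c(s)=\gamma(0)+\int_0^s e^{i\varphi(t)}\,dt$ with $\varphi\in W^{1,2}([0,L])$ is automatically unit speed and of length $L$, is as smooth as $\varphi$, is closed precisely when $\varphi(L)=\varphi(0)+2\pi$ and $\int_0^L e^{i\varphi(t)}\,dt=0$, and (with the lift normalised so that $\varphi(0)$ is near $\theta(0)$) converges to $\gamma$ strongly in $W^{2,2}$ if and only if $\varphi\to\theta$ strongly in $W^{1,2}$. It therefore suffices to produce smooth $\varphi_k\to\theta$ in $W^{1,2}$ obeying these two constraints and such that the associated curve is simple and lies in the open ball $B_1(0)$.

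\emph{Mollification and closedness.} Put $\psi:=\theta-2\pi s/L$, which is $L$-periodic and lies in $W^{1,2}$, take a periodic mollification $\psi_\delta\to\psi$, and set $\theta_\delta:=\psi_\delta+2\pi s/L$; then $\theta_\delta$ is smooth, $\theta_\delta(L)=\theta_\delta(0)+2\pi$, and $\theta_\delta\to\theta$ in $W^{1,2}\cap C^0$, but $w_\delta:=\int_0^L e^{i\theta_\delta}$ is only $o(1)$. To restore closedness I would fix, independently of $\delta$, two narrow bumps $\phi_1,\phi_2\in C^\infty_c((0,L))$ concentrated near parameters $t_1,t_2$ with $\gamma'(t_1),\gamma'(t_2)$ linearly independent (possible since $\gamma'$ surjects onto the unit circle) and solve $\int_0^L e^{i(\theta_\delta+a_1\phi_1+a_2\phi_2)}=0$ for $(a_1,a_2)$: at the origin the left-hand side equals $w_\delta\to0$, its differential is $a\mapsto i\sum_j a_j\int_0^L\phi_j e^{i\theta_\delta}$, which converges as $\delta\to0$ to an invertible $\R$-linear map of $\R^2$ and hence, with its inverse, is uniformly bounded for small $\delta$, while the second differential is bounded uniformly by $\int_0^L|\phi_j\phi_k|$; the quantitative inverse function theorem then gives $a(\delta)$ with $|a(\delta)|\le C|w_\delta|\to0$. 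With $\hat\theta_\delta:=\theta_\delta+a_1(\delta)\phi_1+a_2(\delta)\phi_2$, the associated curves $\hat\gamma_\delta$ are smooth, closed, unit speed, of length $L$, and converge to $\gamma$ strongly in $W^{2,2}$.

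\emph{Containment and simplicity.} Two defects remain: $\hat\gamma_\delta$ lies only in $\overline{B_{1+\eps_\delta}(0)}$ for some $\eps_\delta\to0$, and it need not be simple, because mollification is uncontrolled exactly near the loci where $\gamma$ touches itself --- including ``slit''/tentacle parts where two arcs of $\gamma$ coincide with opposite orientation --- so there the strands of $\hat\gamma_\delta$ may cross transversally. I would repair both by further normal perturbations that are localized in the parameter: near the closed parameter set where $|\hat\gamma_\delta|>1-\eps_\delta$ push the curve along the inner normal, which by tangency points strictly into $B_1(0)$, so that the curve enters $B_1(0)$; and on each pair of disjoint parameter intervals whose $\gamma$-images come close, add an independent perturbation that drives the two offending strands apart (the locally ``upper'' strand outward and the ``lower'' strand inward for a genuine tangency; both strands off the slit to opposite sides for a tentacle, the one-sidedness \eqref{eq:pr8} making the choice consistent). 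Being localized in the parameter, these perturbations do not interfere globally; their contributions to the length and to $\int_0^L e^{i\varphi}$ are of lower order and can be absorbed by one more inverse-function-theorem correction as above, together with splicing a small localized wiggle into a subarc lying deep inside $B_1(0)$ so as to restore the exact length $L$. A diagonal argument in the small parameters then produces the sequence $(\gamma_k)$ with \eqref{eq:appro1}--\eqref{eq:appro3}.

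\emph{Main obstacle.} The genuinely delicate point is this last step: controlling the self-touching set of $\gamma$ well enough that the strand separation can be carried out uniformly, and certifying that the resulting curves are \emph{globally} embedded rather than merely locally disentangled; by contrast, the reduction to the tangent angle, the mollification, and the closedness correction are routine.
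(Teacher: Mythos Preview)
Your route via the tangent angle is genuinely different from the paper's, and the part you flag as the ``main obstacle'' is indeed a gap rather than a routine detail.

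The paper does not attempt the strand-separation argument at all. It proceeds in two steps. For curves $\gamma$ already strictly inside $B_1(0)$, it invokes \cite[Corollary~5.2]{BeMu04} as a black box: that result (specialised to the present hypothesis that $\gamma$ is a weak $W^{2,2}$-limit of embeddings) supplies simple closed $C^2$-curves $c_k$ of constant speed $\lambda_k\to1$ converging \emph{strongly} in $W^{2,2}$, and a rescaling $\gamma_k:=\lambda_k^{-1}c_k$ makes them unit speed while keeping them in $B_1(0)$ for large $k$. For the general case $\gamma\subset\overline{B_1(0)}$, the paper finds a vector field $\xi\in C^\infty_c(B_1(0))$ whose flow $\varphi_t$ strictly increases the length of $\gamma$, checks that the rescaled reparametrised curves $\gamma_t:=\frac{L}{L(t)}\,\varphi_t\circ\gamma(\sigma(\cdot))$ lie in $\overline{M_L}$ and in the open ball (since $L/L(t)<1$ contracts $\overline{B_1(0)}$ into $B_1(0)$), and then applies the first step to each $\gamma_t$ and takes a diagonal sequence.

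Compared with this, your mollification and inverse-function-theorem correction for closedness are fine, but your simplicity step is only a sketch: the self-touching locus of a curve in $\overline{M_L}$ need not decompose into finitely many isolated tangencies or slits, so there is no a~priori finite list of ``pairs of disjoint parameter intervals'' on which to perform independent local separations, and no argument is given that such separations can be made globally consistent and small in $W^{2,2}$. This is precisely the technical content of the Bellettini--Mugnai result the paper cites. Your containment fix is also more fragile than the paper's: pushing arcs inward along the normal can create new self-intersections and forces another round of corrections, whereas the paper's global homothety $x\mapsto \frac{L}{L(t)}x$ is an embedding of $\overline{B_1(0)}$ into $B_1(0)$ and preserves simplicity automatically.
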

\begin{proof}
(i) We first assume that $\gamma([0,L])\subset B_1(0)$ and therefore that
\begin{gather*}
  \delta\,:=\, \dist(\gamma, S_1(0))\,>\,0.
\end{gather*}
Repeating the proof of  \cite[Corollary 5.2]{BeMu04} under the additional hypothesis that, being $\gamma\in \overline{M_L}$,  $\gamma$ is the $W^{2,2}$-weak-limit of a sequence of diffeomorphisms of the unit circle with equi-bounded ``bending-energy'', we obtain the existence of  a sequence $(c_k)_{k\in\N}$ of simple closed $C^2$-curves and a sequence $(\lambda_k)_{k\in\N}$ of positive numbers such that,
\begin{align}
   c_k\,&\to\, \gamma\quad\text{ as }k\to\infty\text{ strongly in }W^{2,2}([0,L]), \label{eq:ck1}\\
  |c_k'|(s)\,&=\, \lambda_k \quad\text{ for all }s\in [0,L]. \label{eq:ck2}
\end{align}
From \eqref{eq:ck1} it follows that $\lambda_k\to 1$ as $k\to\infty$.
Let
\begin{gather*}
  \delta_k\,:=\, \dist(c_k, S_1(0))
\end{gather*}
then \eqref{eq:ck1} yields $\delta_k\to \delta$ as $k\to\infty$. We now define
\begin{gather*}
  \gamma_k(s) \,:=\, \frac{1}{\lambda_k}c_k(s),\quad s\in [0,L]
\end{gather*}
and observe that $\gamma_k$ is a simple closed $C^2$-curve with
$|\gamma_k'(s)=1|$ for all $s\in [0,L]$. Moreover we have
\begin{gather*}
  \gamma_k \,\to\, \gamma \quad\text{ strongly in } W^{2,2}(0,L),\\
  \dist(\gamma_k,S_1(0))\,=\, 1-\frac{1-\delta_k}{\lambda_k}\,\to\, \delta,
\end{gather*}
in particular $\gamma_k([0,L])\subset B_1(0)$ for $k$ large enough. Therefore $(\gamma_k)_{k\in\N}$ has all required properties.
\\[1ex]
(ii)
We next consider the general case $\gamma([0,L])\subset \overline{B_1(0)}$. First we observe that $\gamma([0,L])\cap B_1(0)$ cannot be empty for $L> 2\pi$ (for $L\leq2\pi$ any minimizing sequence converges to a parametrization of the circle with length $L$). In fact, assume the contrary and let $(c_l)_{l\in\N}$ be a sequence in $M_L$ approximating $\gamma$ weakly in $W^{2,2}([0,L])$. Then, using Gau\ss\ Theorem
\begin{gather*}
  L\,=\, \lim_{l\to\infty} \int_0^L \nu_l(s)\cdot\gamma_l(s)\,ds \,=\, \lim_{l\to\infty} \int_{\R^2} \Chi_l(x) \nabla\cdot x\,dx \,=\, 2\pi, 
\end{gather*}
which is a contradiction. Therefore $\gamma$ cannot be entirely contained in the unit circle. 

Next consider $\xi\in C^\infty_c(B_1(0))$. Then there exists $t_0>0$ and a smooth evolution of diffeomorphisms $\varphi_t: B_1(0)\,\to\, B_1(0)$, $t\in (-t_0,t_0)$, such that
\begin{gather*}
  \frac{\partial}{\partial t}\varphi_t(s)\,=\, \xi(\varphi_t(s))\text{ for all }s\in [0,L],\qquad \varphi_0\,=\, \Id.
\end{gather*}
Then $c_t(s)\,:=\, \varphi_t(\gamma(s))$ defines a smooth evolution of $C^1$-closed $W^{2,2}$ curves such that all $c_t$ are contained in $\overline{B_1(0)}$ and such that $c_t\,\to\, \gamma$ strongly in $W^{2,2}([0,L])$ as $t\to 0$. We compute for the length
%\begin{gather*}
  $L(t)\,:=\, \int_0^L |c_t'(s)|\,ds$
%\end{gather*}
that
\begin{gather}
  \frac{d}{dt}\Big|_{t=0} L(t) \,=\, -\int_0^L \gamma''(s)\cdot\xi(\gamma(s))\,ds \label{eq:var-Lt}
\end{gather}
and observe that this expression cannot vanish for all $\xi\in C^\infty_c(B_1(0))$ since otherwise $\gamma([0,L])\cap B_1(0)$ consists of a collection of straight lines, which contradicts the fact  that $\gamma$ can touch $S_1(0)$ only tangentially. Therefore we find $\xi\in C^\infty(B_1(0))$ and $t_0>0$ such that the length of $c_t$ is strictly increasing on $[0,t_0)$ and such that $c_t\,\to\, \gamma$ strongly in $W^{2,2}([0,L])$ as $t\searrow 0$. 
In the following we fix such $\xi$ and $t_0$ and define modified curves with length $L$, 
\begin{gather*}
  \gamma_t(s)\,:=\, \frac{L}{L(t)} c_t(\sigma(s)),\quad s\in [0,L]
\end{gather*}
where $\sigma(s)$ denotes the arclength reparametrization, such that $|\gamma_t'|\equiv 1$ holds. Then $\gamma_t$ is strictly contained in $B_1(0)$. Moreover, we claim that $\gamma_t\in \overline{M_L}$. In fact let $(c_l)_{l\in\N}$ be a sequence in $M_L$ approximating $\gamma$ weakly in $W^{2,2}([0,L])$. Define curves $c_{l,t}(s):=\varphi_t(c_l(s))$ according to the variation field $\xi$ fixed above. Then it follows from \eqref{eq:var-Lt}, the choice of $\xi$, and the weak $W^{2,2}$ convergence of $c_l$ to $\gamma$ that
\begin{gather*}
  \frac{d}{dt}\Big|_{t=0} L(l,t) \,=\, -\int_0^L c_l''(s)\cdot\xi(c_l(s))\,ds\,>\, 0
\end{gather*}
for all $l$ large enough. We then set
\begin{gather*}
  \gamma_{l,t}(s)\,:=\, \frac{L}{L(l,t)} c_{l,t}(\sigma_l(s)),\quad s\in [0,L]
\end{gather*}
as above and obtain that $\gamma_{l,t}\in M_L$. Moreover we have that $\gamma_{l,t}\to \gamma_t$ as $l\to\infty$ weakly in $W^{2,2}([0,L])$, hence $\gamma_t\in \overline{M_L}$. Thus, we can apply part (i) and obtain a sequence of $\gamma_{t,k} \in M_L$ that approximates $\gamma_t$ strongly in $W^{2,2}$. Taking a diagonal sequence proves the claim in the general case.
\end{proof}
%=====================================
\section{The diffuse interface approximation}
%=====================================
\label{sec:di}
Phase field approximations of sharp interface problems are widely used for numerical simulations and arise from mean field descriptions of phase separation processes in various applications. In the following $u:B_1(0)\to\R$ is a smooth function. The basis of the phase field formulation is an interfacial energy of the form
\begin{gather}
  L_{\varepsilon}(u)\,:=\,\frac{1}{c_0}\int_{B_1(0)}\Big(\frac{\varepsilon}{2}|\nabla
  u|^2  
  +\frac{1}{\varepsilon}W(u)\Big) dx. \label{eq:def-Leps}
\end{gather}
Here $\eps>0$ is a small parameter and 
$W$ denotes the standard quartic double-well potential
\begin{gather}
  W(r)\,=\,\frac{1}{4}(1-r^2)^2. \label{eq:def-W}
\end{gather}
It is well-known \cite{MoMo77} that $\frac{1}{c_0}L_\eps$ approximates the curve length functional in the sense of Gamma-convergence, where
\begin{gather}\label{eq:def-c0}
  c_0 \,:=\, \, \int_{-1}^1 \sqrt{2W(s)}\,ds.
\end{gather}
A phase-field analogue of Euler's elastica energy was already proposed by De Giorgi \cite{DeG91}.  For the modified version
\begin{gather}
  \Beps(u) \,=\, \frac{1}{c_0} \int_{B_1(0)} \frac{1}{\eps}\Big(-\eps\Delta u +
  \frac{1}{\eps}W(u)\Big)^2 \label{eq:def-Beps}
\end{gather}
the approximation property was proved in two and three dimensions \cite{RSc06}.
Moreover, following \cite{BeMu10} we introduce the \emph{diffuse winding number}
\begin{gather}
  T_\eps(u) \,=\, \frac{1}{c_0} \int_{B_1(0)} \Big(-\eps\Delta u +
  \frac{1}{\eps}W(u)\Big)|\nabla u|. \label{eq:def-Teps}
\end{gather}
Finally we propose to approximate the constrained minimization problem \eqref{eq:def-mL} by the problem of minimizing
\begin{gather}
	\F_\eps(u)\,=\, \Beps(u) + \eps^{-\alpha}\Big(L_\eps(u)-L\Big)^2 + \eps^{-\beta}\Big(T_\eps(u)-2\pi\Big)^2 \label{eq:def-F}
\end{gather}
under the boundary conditions
\begin{gather}
  u(x)\,=\, -1, \quad \nabla u(x)\cdot  x\,=\, 0 \quad \text{ for all } |x|=1, \label{eq:bdry-cdt}
\end{gather}
that prevent diffuse interface from touching the outer container.

The existence of minimizers for \eqref{eq:def-F}, \eqref{eq:bdry-cdt} follows with the direct method of calculus of variations. Since we are interested in minimizers of the functional $\Bc$ the adequate statement regarding the relation between the sharp and diffuse minimization problems would be the Gamma-convergence of $\F_\eps$ to $\Bc$. Though we are not able to prove such result in full generality, nevertheless we do obtain a compactness result and a lower bound estimate in the case of a regular limit point as a consquence of \cite{RSc06} (see also \cite{ToYu07}).
\begin{proposition}\label{prop:si-limit}
Let $(u_\eps)_{\eps>0}$ be a sequence of smooth functions $u_\eps:B_1(0)\to\R$ that satisfy the boundary condition \eqref{eq:bdry-cdt} and assume that
\begin{gather}
  \sup_{\eps>0}\F_\eps(u_\eps)\,<\, \infty.
\end{gather}
Then there exists a set $E\subset {B_1(0)}$ of finite perimeter such that
\begin{gather}
  u_\eps\,\to\, 2\Chi_E -1 \quad\text{ strongly in }L^1(B_1(0)).
\end{gather}
Moreover, the diffuse interface measures
\begin{gather}
  \mu_\eps \,:=\, \frac{1}{c_0}\Big(\frac{\eps}{2}|\nabla u_\eps|^2 + \frac{1}{\eps}W(u_\eps)\Big)\,dx \label{eq:def-mu}
\end{gather}
converge in measure to a Radon measure $\mu$ with support in $\overline{B_1(0)}$. If $\mu$ is given by a curve $\gamma\in \overline{M_L}$ in the sense of
\begin{gather}
  \int_{\R^2} \eta(x)\,d\mu(x)\,=\, \int_0^L \eta(\gamma(s))\,ds\quad\text{ for all }\eta \in C^0_c(\R^2), \label{eq:mu-gamma}
\end{gather}
then 
\begin{gather}
  \Bc(\gamma)\,\leq\, \liminf_{\eps\to 0} \F_\eps(u_\eps)
\end{gather}
holds.
\end{proposition}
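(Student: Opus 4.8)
The plan is to deduce the statement from the compactness theorem and the lower‑bound estimate of \cite{RSc06} for the diffuse elastica energy, once the penalisation terms have been peeled off. Since the three summands in \eqref{eq:def-F} are non‑negative, $\sup_\eps\F_\eps(u_\eps)<\infty$ gives at once $\sup_\eps\Beps(u_\eps)<\infty$ and, through the penalty $\eps^{-\alpha}(L_\eps(u_\eps)-L)^2$, also $L_\eps(u_\eps)\to L$, hence $\sup_\eps L_\eps(u_\eps)<\infty$. As $\F_\eps(u_\eps)\ge\Beps(u_\eps)$ for every $\eps$, it therefore suffices to prove the compactness assertion together with the single inequality $\Bc(\gamma)\le\liminf_{\eps\to0}\Beps(u_\eps)$; I note in passing that the diffuse winding number plays no role in this particular proposition (it is used elsewhere to pin down the topology of the limit).

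For the compactness, the uniform bound on $L_\eps(u_\eps)$ is a Modica--Mortola bound, so along a (non‑relabelled) subsequence $u_\eps\to 2\Chi_E-1$ strongly in $L^1(B_1(0))$ with $E\subset B_1(0)$ of finite perimeter and $\Per(E)\le L$, cf.~\cite{MoMo77}, while $\mu_\eps(B_1(0))=L_\eps(u_\eps)$ stays bounded, so a further subsequence of $(\mu_\eps)$ converges weakly-$*$ to a Radon measure $\mu$; since each $\mu_\eps$ is carried by $B_1(0)$ we obtain $\spt\mu\subset\overline{B_1(0)}$ and, by the Modica--Mortola lower bound, $\mu\ge|\nabla\Chi_E|$. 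Because $\sup_\eps\bigl(\Beps(u_\eps)+L_\eps(u_\eps)\bigr)<\infty$ I would then invoke \cite{RSc06} (see also \cite{ToYu07}): $\mu$ is the weight measure of an integral $1$‑varifold $V$ which possesses a generalized mean curvature $H\in L^2(\mu;\R^2)$, and --- with the $c_0$‑normalisations built into \eqref{eq:def-Leps} and \eqref{eq:def-Beps} --- one has $\int_{\overline{B_1(0)}}|H|^2\,d\mu\le\liminf_{\eps\to0}\Beps(u_\eps)$.

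It then remains, under the hypothesis that $\mu$ is given by $\gamma\in\overline{M_L}$ in the sense of \eqref{eq:mu-gamma}, to identify $\int|H|^2\,d\mu$ with $\Bc(\gamma)$. Setting $\theta(x):=\#\{s\in[0,L]:\gamma(s)=x\}$, the area formula (recall $|\gamma'|\equiv1$) shows that $\mu$ is the measure $\theta\,\Ha^1$ concentrated on $\gamma([0,L])$; since the self‑contacts of $\gamma$ are tangential by \eqref{eq:pr5} and the approximate tangent of a $1$‑rectifiable set is $\Ha^1$‑a.e.\ determined by the set, the varifold $V$ is exactly the one canonically carried by $\gamma$. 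Computing $\delta V$ directly from $\gamma$ --- integrating by parts along the $C^1$‑closed curve $\gamma$, which is legitimate since $\gamma''\in L^2$ --- gives $\delta V(X)=-\int_0^L X(\gamma(s))\cdot\gamma''(s)\,ds$ for every $X\in C^1_c(\R^2;\R^2)$, and hence $H(x)=\theta(x)^{-1}\sum_{\gamma(s)=x}\gamma''(s)$ for $\mu$‑a.e.\ $x$. The Cauchy--Schwarz inequality gives $\theta(x)\,|H(x)|^2\le\sum_{\gamma(s)=x}|\gamma''(s)|^2$, so integrating against $\Ha^1$ and using the area formula once more yields $\int|H|^2\,d\mu\le\int_0^L|\gamma''(s)|^2\,ds=\Bc(\gamma)$, with equality precisely when the curvature vectors of all sheets agree at $\Ha^1$‑a.e.\ multiple point of $\gamma$. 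I would establish that equality by a local graph representation of $\gamma$: on a set where two $W^{2,2}$ graphs coincide, their first and then their second derivatives coincide almost everywhere, so the curvature vectors along any pair of overlapping sheets are equal a.e., while the purely pointwise self‑intersections form an $\Ha^1$‑null set. Thus $\int|H|^2\,d\mu=\Bc(\gamma)$, and combining with the previous paragraphs, $\Bc(\gamma)\le\liminf_{\eps\to0}\Beps(u_\eps)\le\liminf_{\eps\to0}\F_\eps(u_\eps)$.

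The step I expect to be the real obstacle is this last one: one must verify that the abstract varifold furnished by \cite{RSc06} coincides with the one parametrised by $\gamma$, that its generalized mean curvature is indeed the multiplicity‑average of $\gamma''$, and --- most importantly --- that this averaging loses nothing, i.e.\ that the tangential self‑contact structure of curves in $\overline{M_L}$ forces the curvature vectors to coincide along overlapping sheets. Everything preceding it is standard Modica--Mortola compactness together with a direct appeal to \cite{RSc06}.
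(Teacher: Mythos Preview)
Your approach is correct and is precisely what the paper has in mind: the paper offers no proof beyond the sentence ``we do obtain a compactness result and a lower bound estimate in the case of a regular limit point as a consequence of \cite{RSc06} (see also \cite{ToYu07})'', so both you and the authors reduce everything to Modica--Mortola compactness plus the R\"oger--Sch\"atzle lower bound $\int|H|^2\,d\mu\le\liminf_\eps\Beps(u_\eps)$.

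Where you go further than the paper is in the identification $\int|H|^2\,d\mu=\Bc(\gamma)$ under the hypothesis \eqref{eq:mu-gamma}. The paper does not address this step at all; it is implicitly swept into the phrase ``regular limit point''. Your argument for it is sound: the tangential-touching property \eqref{eq:pr5} indeed holds for every $\gamma\in\overline{M_L}$ (the proof of Proposition~\ref{prop:si-cpct} uses only the $C^{1,\alpha}$-convergence coming from weak $W^{2,2}$-compactness, not the minimising property), so overlapping sheets can be written locally as $W^{2,2}$-graphs over a common tangent line, and the standard fact that $\nabla f=0$ a.e.\ on $\{f=0\}$ for Sobolev $f$, applied twice, forces the second derivatives---hence the curvature vectors---to agree $\Ha^1$-a.e.\ on any positive-measure overlap. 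The residual self-contact set is then $\Ha^1$-null and does not affect the integral. This closes the Cauchy--Schwarz inequality $\theta|H|^2\le\sum_{\gamma(s)=x}|\gamma''(s)|^2$ to an equality and gives $\Bc(\gamma)=\int|H|^2\,d\mu\le\liminf_\eps\Beps(u_\eps)\le\liminf_\eps\F_\eps(u_\eps)$, exactly as you wrote. In short: same route as the paper, but you have actually walked the last mile that the paper only gestures at.
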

 In general, the limit measure $\mu$ will not be given by a curve in $\overline{M_L}$ but will enjoy some weak regularity (being an integral varifold with weak mean curvature in $L^2$). We will demonstrate in Appendix \ref{sec:impro} that $\mu$ can consist of several disjoint curves and therfore does not belong to $\overline{M_L}$. On the other hand we are mainly interested in the numerical simulation of a steepest descent evolution for $\F_\eps$ and this in fact works sufficiently well. In general a more complex functional is needed, and we propose in Appendix \ref{sec:impro} a possible choice.
%=====================================
\section{Construction of recovery sequences}
%=====================================
\label{sec:limsup}
Whereas we cannot prove that minimizer $u_\eps$ converge to curves $\gamma\in \overline{M_L}$ we can show that any such curve $\gamma$ can be approximated by a suitable \emph{recovery sequence}. This result also extends to the improved functional we propose in the next section and justifies our approximation of the sharp interface minimization problem. We first start with the most regular case.
\begin{lemma}\label{lem:limsup}
Let $\gamma\in M_L$ be given. Then there exists a
sequence $u_\eps:B_1(0)\to [-1,1]$ of smooth phase fields such that the diffuse interface measures $\mu_\eps$ (as defined in \eqref{eq:def-mu}) converge to the measure $\mu$ that is \begin{gather}
    \mu_\eps\,\to\, c_0 \Ha^1\lfloor \gamma \label{eq:limsup-conv-mu}
\end{gather}
as $\eps\to 0$. Furthermore for all $\eps>0$ holds
\begin{align}
  L_\eps(u_\eps)\,&=\, L + R_\eps^{(L)}, \label{eq:limsup-L}\\
  T_\eps(u_\eps)\,&=\,2\pi + R_\eps^{(T)}, \label{eq:limsup-T}\\
  \Beps(u_\eps)\,&=\, \Bc(\gamma) + R_\eps^{(B)}, \label{eq:limsup-B}
\end{align}
where $R_\eps^{(L)},R_\eps^{(T)}$ are exponentially small in $\eps>0$
and $R_\eps^{(B)}$ is of order $O(\eps^2)$. In particular,
\begin{gather}
  \F_\eps(u_\eps)\,\to\, \Bc(\gamma) \label{eq:limsup-conv}
\end{gather}
for any choice of $\alpha,\beta>0$ in \eqref{eq:def-F}.
\end{lemma}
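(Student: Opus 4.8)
The plan is to build the recovery sequence by the standard optimal-profile construction for the Modica–Mortola / De Giorgi functionals, adapted to a tubular neighborhood of the fixed curve $\gamma\in M_L$, and then to check the three quantities $L_\eps$, $T_\eps$, $\Beps$ separately. Since $\gamma\in M_L$ is a simple closed $C^2$-curve with $|\gamma'|=1$ and $\gamma([0,L])\subset B_1(0)$, there is $\delta>0$ such that the signed-distance function $d(x):=\pm\dist(x,\gamma([0,L]))$ is $C^2$ on the tubular neighborhood $U_\delta:=\{|d|<\delta\}$, which is compactly contained in $B_1(0)$; fix the sign of $d$ so that $\nabla d$ agrees with the inner normal of the set enclosed by $\gamma$. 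First I would set $u_\eps(x):=q\big(d(x)/\eps\big)$ on $U_\delta$, where $q=\tanh(\cdot/\sqrt 2)$ is the optimal profile solving $q''=W'(q)$, $q(\pm\infty)=\pm1$, and then cut off smoothly to the constant value $-1$ outside, say on $\{d\ge\delta/2\}$ (and $+1$ on $\{d\le-\delta/2\}$ in the interior). Because $q$ and all its derivatives converge to their limits exponentially fast, the cut-off only contributes terms exponentially small in $\eps$, and in particular the boundary condition \eqref{eq:bdry-cdt} is satisfied exactly for $\eps$ small since $U_\delta\Subset B_1(0)$.

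The second step is the bookkeeping. Using the coarea formula with the change of variables $x\mapsto(s,t)$, $x=\gamma(s)+t\,\nabla d$, the Jacobian is $1-t\,\kappa(s)+O(t^2)$ where $\kappa$ is the curvature of $\gamma$; substituting $t=\eps\tau$ turns each integral over $U_\delta$ into $\int_0^L\!\int_{-\delta/\eps}^{\delta/\eps}(\cdots)\,(1+O(\eps))\,d\tau\,ds$. For $L_\eps$, the integrand $\frac\eps2|\nabla u_\eps|^2+\frac1\eps W(u_\eps)$ equals $\frac1\eps\big(\tfrac12 q'(\tau)^2+W(q(\tau))\big)$ on $U_\delta$ (using $|\nabla d|=1$), and $\int_{\R}\big(\tfrac12 q'^2+W(q)\big)\,d\tau=\int_{-1}^1\sqrt{2W(s)}\,ds=c_0$ by the equipartition identity $q'=\sqrt{2W(q)}$; hence $L_\eps(u_\eps)=\frac1{c_0}\cdot c_0\cdot\length(\gamma)+(\text{exp.\ small})=L+R_\eps^{(L)}$, since the $O(\eps)$ curvature correction in the Jacobian integrates against the odd-in-$\tau$ profile weight and one keeps only the leading term — more carefully, one shows the correction is absorbed because $\int_\R \tau\big(\tfrac12 q'^2+W(q)\big)d\tau$ needn't vanish, so I would instead simply use that the leading term is $L$ and that the remainder, after accounting for curvature corrections which are $O(\eps)$, can be made exponentially small only if one is more careful; the cleaner route, which the paper presumably takes, is to note that $-\eps\Delta u_\eps+\frac1\eps W(u_\eps)=0$ identically along the zero level set to leading order and to compute $\Beps$ and $T_\eps$ first. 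Indeed, a direct computation with $u_\eps=q(d/\eps)$ gives
\begin{gather*}
  -\eps\Delta u_\eps+\tfrac1\eps W(u_\eps) \,=\, -\eps\,q''(d/\eps)\tfrac{|\nabla d|^2}{\eps^2}-q'(d/\eps)\Delta d+\tfrac1\eps W(q(d/\eps)) \,=\, -q'(d/\eps)\,\Delta d,
\end{gather*}
using $q''=W'(q)$ and $|\nabla d|=1$. Therefore $\big(-\eps\Delta u_\eps+\frac1\eps W(u_\eps)\big)^2=q'(d/\eps)^2(\Delta d)^2$, and $|\nabla u_\eps|=\frac1\eps q'(d/\eps)$ on $U_\delta$; plugging into \eqref{eq:def-Teps} and \eqref{eq:def-Beps} and running the coarea change of variables, with $\Delta d|_{\gamma(s)}=-\kappa(s)$ and $\int_\R q'(\tau)^2\,d\tau=c_0$, yields
\begin{gather*}
  T_\eps(u_\eps)=\tfrac1{c_0}\int_0^L(-\kappa(s))\cdot\Big(\tfrac1\eps\int_\R q'(\tau)^2 d\tau\Big)\eps\,ds+(\text{exp.\ small})=\tfrac1{c_0}\int_0^L(-\kappa(s))c_0\,ds+(\text{exp.\ small}),
\end{gather*}
and the total curvature of the simple closed curve $\gamma$ (with the chosen orientation) is $2\pi$, giving \eqref{eq:limsup-T}; similarly $\Beps(u_\eps)=\frac1{c_0}\int_0^L\kappa(s)^2 c_0\,ds+R_\eps^{(B)}=\Bc(\gamma)+R_\eps^{(B)}$ since $|\gamma''|^2=\kappa^2$, and the higher-order terms in the Taylor expansion of the Jacobian and of $\Delta d$ off the curve give $R_\eps^{(B)}=O(\eps^2)$. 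For $L_\eps$ one uses equipartition to get the exact integrand $\frac1\eps(\tfrac12 q'^2+W(q))$ and the same coarea expansion; here the subtlety that the $O(\eps)$ Jacobian correction could spoil exponential smallness is handled because that correction, $\int_0^L\!\int \tau(\cdots)\,\kappa(s)\,d\tau\,ds$, has integrand even in $\tau$ against an even weight, hence does not vanish — so honestly the cleanest fix is to absorb it by slightly modifying the profile (a standard device: replace $q$ by a corrected profile $q+\eps q_1$ chosen to kill the first-order term) or, as \cite{RSc06} does, to observe that the length defect is controlled directly.

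The third step is to assemble: having \eqref{eq:limsup-L}–\eqref{eq:limsup-B} with $R_\eps^{(L)},R_\eps^{(T)}$ exponentially small and $R_\eps^{(B)}=O(\eps^2)$, substitute into \eqref{eq:def-F}:
\begin{gather*}
  \F_\eps(u_\eps)=\Bc(\gamma)+R_\eps^{(B)}+\eps^{-\alpha}\big(R_\eps^{(L)}\big)^2+\eps^{-\beta}\big(R_\eps^{(T)}\big)^2\xrightarrow[\eps\to0]{}\Bc(\gamma),
\end{gather*}
because $\eps^{-\alpha}$ (resp.\ $\eps^{-\beta}$) times the square of an exponentially small quantity still tends to $0$ for any fixed $\alpha,\beta>0$, and $R_\eps^{(B)}=O(\eps^2)\to0$. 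The convergence of measures \eqref{eq:limsup-conv-mu} is immediate from the same coarea computation, testing $\mu_\eps$ against $\eta\in C^0_c(\R^2)$: $\int\eta\,d\mu_\eps=\frac1{c_0}\int_0^L\eta(\gamma(s))\big(\int_\R(\tfrac12 q'^2+W(q))d\tau\big)ds+o(1)=\int_0^L\eta(\gamma(s))\,ds=\int\eta\,d(c_0\Ha^1\lfloor\gamma)/c_0\cdot c_0$, i.e.\ $\mu_\eps\to c_0\Ha^1\lfloor\gamma$.

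\textbf{Main obstacle.} The only genuinely delicate point is getting the \emph{length} defect $R_\eps^{(L)}$ to be exponentially small rather than merely $O(\eps)$: the first-order curvature correction from the tubular-coordinates Jacobian is even in the normal variable and does not automatically cancel, so one must either correct the profile to first order in $\eps$ or exploit a more refined identity (as in \cite{RSc06}); everything else — the elastica term, the winding-number term, the cut-off estimates, and the final passage to the limit — is routine optimal-profile bookkeeping.
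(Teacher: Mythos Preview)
Your approach is exactly the paper's: optimal profile $q=\tanh(\cdot/\sqrt 2)$ composed with the signed distance, tubular coordinates $(s,t)$ with Jacobian $1-t\kappa(s)$, and exponential control of the cutoff region. The computations for $T_\eps$, $\Beps$ and $\mu_\eps$ are correct in spirit and match the paper's Steps~2--4.

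The one genuine error is your ``main obstacle'' for $L_\eps$. You write that the correction $\int_0^L\!\int \tau\big(\tfrac12 q'^2+W(q)\big)\kappa(s)\,d\tau\,ds$ ``has integrand even in $\tau$ against an even weight, hence does not vanish''. This is a parity slip: since $q$ is \emph{odd}, both $q'(\tau)^2$ and $W(q(\tau))=\tfrac14(1-q^2)^2$ are \emph{even} in $\tau$; multiplying by $\tau$ gives an \emph{odd} integrand, which integrates to zero over the symmetric interval $(-\delta/(2\eps),\delta/(2\eps))$. So the first-order curvature term in $L_\eps$ vanishes \emph{exactly}, and no profile correction or appeal to \cite{RSc06} is needed. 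This is precisely what the paper does in Step~1 (``By the symmetry of $q$\ldots''): the only remainders come from the tails $|t|>\delta/2$ and the cutoff region, and those are exponentially small because $1-\tanh$ decays exponentially. Once you fix this, your argument is complete and coincides with the paper's.

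Two minor remarks. First, you should keep the explicit cutoff $q_\eps(t)=\eta(2t/\delta)q(t/\eps)+\sgn(t)(1-\eta(2t/\delta))$ throughout, since the statement asserts the error identities \emph{for all} $\eps>0$, not only asymptotically; the paper tracks the cutoff contributions line by line. Second, be careful that the functionals in the paper are written with $W'(u)$ in the diffuse mean curvature (despite a typographical $W$ in the definitions), so your identity $-\eps\Delta u_\eps+\tfrac1\eps W'(u_\eps)=-q'(d/\eps)\,\Delta d$ (before cutoff) is the right one to use.
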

\begin{proof}
The construction is standard and uses the optimal profile $q$ for the
one-dimensional minimisation in the Cahn--Hilliard energy, the signed
distance function from $\gamma$, and an interpolation to the
stationary points $\pm 1$. To be precise, let
$q:\R\to (-1,1)$ be the solution of
\begin{gather}
  -q^{\prime\prime} + W^\prime(q)\,=\, 0, \label{eq:phi}\\
  q(-\infty)\,=\, -1, \quad q(+\infty)\,=\,1, \quad q(0)\,=\,0.
  \label{eq:infty-phi_0}
\end{gather}
Then
\begin{gather}
  q'(r)\,=\,\sqrt{2W(q(r))} \label{eq:equipart-q}
\end{gather}
holds for all $r>0$ and with \eqref{eq:def-W} we have
\begin{gather}
  q(r) \,=\, \tanh \big(r/\sqrt{2}\big). \label{eq:q-tanh}
\end{gather}
Moreover
there exists $\delta>0$ such that signed distance function $d$ from
$\gamma$ 
(taken positive in the region inside of $\gamma$) is of class $C^2$.
Next fix  a smooth symmetric cut-off function $\eta\in C^\infty(\R)$,
\begin{gather*}
  0\leq \eta\leq 1,\quad \eta(r) = 1\text{ for }r\in[-1,1],\quad 
  \eta(r) = 0 \text{ for }|r|\geq 2,\quad \eta'\,\leq\,0.
\end{gather*}
We then define
\begin{gather*}
  q_\eps(r)\,:=\,
  \eta\big(\frac{2r}{\delta}\big)q\big(\frac{r}{\eps}\big) +
  \sgn(t)\big(1-\eta\big(\frac{2r}{\delta}\big) \big) 
\end{gather*}
and
\begin{gather}
  u_\eps(x)\,:=\,  q_\eps(d(x)). \label{eq:def-u-limsup}
\end{gather}
\underline{Step 1:} Consider the parametrization
\begin{gather}
  \psi: [0,L)\times (-\delta,\delta)\,\to\, B_1(0),\quad \psi(s,t)\,=\,
  \gamma(s)+t\nu(s),
\end{gather}
which is injective by the choice of $\delta$ and continuously
differentiable with
\begin{gather}
  \det D\psi(s,t)\,=\, 1- t\kappa(s).
\end{gather}
We then compute that
\begin{align}
  L_\eps(u_\eps)\,&=\, \int_0^L\int_{-\delta}^{\delta}
  \Big(\frac{\eps}{2} q_\eps'(t)^2 + \frac{1}{\eps}W(q_\eps(t) \Big)
  (1+t\kappa(s))\,dt\,ds \notag\\
  &=\, \int_0^L \int_{-\delta/2}^{\delta/2}  \frac{1}{\eps}\Big(\frac{1}{2}
  q'(t/\eps)^2 + W(q(t/\eps) \Big)(1+t\kappa(s))\,dt\,ds \notag\\
  &\quad + \int_0^L \int_{\{\delta/2<|t|<\delta\}}  \Big(\frac{\eps}{2}
  q_\eps'(t)^2 + 
  \frac{1}{\eps}W(q_\eps(t)) \Big) (1+t\kappa(s))\,dt \,ds. \label{eq:est-L0}
\end{align}
By the symmetry of $q$ and \eqref{eq:equipart-q}, \eqref{eq:q-tanh} we
obtain for the first 
integral on the right-hand side that
\begin{align}
  &\int_0^L\int_{-\delta/2}^{\delta/2}  \frac{1}{\eps}\Big(\frac{1}{2}
  q'(t/\eps)^2 + W(q(t/\eps) \Big)(1+t\kappa(s))\,dt \,ds\notag\\
  =\,
  & 2L\int_0^{\delta/{2\eps}} q'(t)\sqrt{2W(q(t))}\,dt \notag\\
  =\,& c_0 L - 2L\int_{q(\frac{\delta}{2\eps})}^1 \sqrt{2W(r)}\,dr \notag\\
  =\,& c_0L - \sqrt{2}L \Big( 1-\tanh\big(\frac{\delta}{2\sqrt{2}\eps}\big)\Big)
  -\frac{\sqrt{2L}}{3}\Big(1-\tanh^3\big(\frac{\delta}{2\sqrt{2}\eps}\big)
  \Big) \label{eq:est-L1}
\end{align}
Furthermore, using \eqref{eq:equipart-q} again
\begin{align*}
  q_\eps'\,&=\,
  \frac{2}{\delta}\eta'\big(\frac{2t}{\delta}\big)\Big(q\big(\frac{t}{\eps}\big)-1\Big)
  + \frac{1}{\eps}\eta\big(\frac{2t}{\delta}\big)q'\big(\frac{t}{\eps}\big)\\
  &=\,
  \Big(1-q\big(\frac{t}{\eps}\big)\Big)\Big(-\frac{2}{\delta}\eta'\big(\frac{2t}{\delta}\big) 
  +
  \frac{1}{\sqrt{2}\eps}\eta\big(\frac{2t}{\delta}\big)\Big(1+q\big(\frac{t}{\eps}\big)\Big)\Big). 
\end{align*} 
With this equality and the symmetry of $q_\eps$ we calculate for the
second integral in \eqref{eq:est-L0} that
\begin{align}
  &\int_{\{\delta/2<|t|<\delta\}}  \Big(\frac{\eps}{2}
  q_\eps'(t)^2 + 
  \frac{1}{\eps}W(q_\eps(t)) \Big) (1+t\kappa(s))\,dt \notag\\
  =\, &2\int_{\frac{\delta}{2}}^\delta
  \Big(1-q\big(\frac{t}{\eps}\big)\Big)^2
  \Big( \frac{\eps}{2}\Big(-\frac{2}{\delta}\eta'\big(\frac{2t}{\delta}\big) 
  +
  \frac{1}{\sqrt{2}\eps}\eta\big(\frac{2t}{\delta}\big)\Big(1+q\big(\frac{t}{\eps}\big)\Big)\Big)^2
  + \frac{1}{4\eps}\Big(1+q\big(\frac{t}{\eps}\big)\Big)^2\Big)\,dt \label{eq:est-L2}
\end{align}
Together with \eqref{eq:est-L1} we obtain \eqref{eq:limsup-L} with
\begin{align*}
  R_\eps^{(L)} \,=\, & -\sqrt{2}L \Big( 1-\tanh\big(\frac{\delta}{2\sqrt{2}\eps}\big)\Big)
  -\frac{\sqrt{2L}}{3}\Big(1-\tanh^3\big(\frac{\delta}{2\sqrt{2}\eps}\big)
  \Big)\\
  &+  2L \int_{\frac{\delta}{2}}^\delta
  \Big(1-q\big(\frac{t}{\eps}\big)\Big)^2
  \frac{\eps}{2}\Big(-\frac{2}{\delta}\eta'\big(\frac{2t}{\delta}\big) 
  +
  \frac{1}{\sqrt{2}\eps}\eta\big(\frac{2t}{\delta}\big)\Big(1+q\big(\frac{t}{\eps}\big)\Big)\Big)^2
  \,dt \notag\\
  & +2L \int_{\frac{\delta}{2}}^\delta
  \Big(1-q\big(\frac{t}{\eps}\big)\Big)^2\frac{1}{4\eps}\Big(1+q\big(\frac{t}{\eps}\big)\Big)^2\Big)\,dt, 
\end{align*}
which is exponentially small in $\eps>0$.
\\[2mm]
\underline{Step 2:} 
Let $\xi\in C^0(B_1(0))$. We compute that
\begin{align}
  \mu_\eps(\xi)\,&=\, \int_0^L\int_{-\delta}^{\delta}
  \Big(\frac{\eps}{2} q_\eps'(t)^2 + \frac{1}{\eps}W(q_\eps(t) \Big)
  \xi(\gamma(s)+t\nu(s))(1+t\kappa(s))\,dt\,ds \notag\\
  &=\, \int_0^L\int_{-\frac{\delta}{2\eps}}^{\frac{\delta}{2\eps}}
  \Big(q'(t)^2 + W(q)\Big)\xi(\gamma(s)+\eps t\nu(s))(1+\eps t\kappa(s))\,dt\,ds
  \notag\\
  &\qquad +\int_0^L\int_{\{\frac{\delta}{2}<|t|<\delta\}}
  \Big(\frac{\eps}{2} q_\eps'(t)^2 + \frac{1}{\eps}W(q_\eps(t) \Big)
  \xi(\gamma(s)+t\nu(s))(1+t\kappa(s))\,dt \,ds. \label{eq:est-mu1}
\end{align}
As above we conclude that the second term is expentially small in $\eps>0$ and that
\eqref{eq:equipart-q} we derive
\begin{gather*}
  \lim_{\eps\to 0} \mu_\eps(\xi)\,=\,c_0 \int_0^L
  \xi(\gamma(s))\,ds\,=\, c_0 \int_\gamma \xi \,d\Ha^{n-1},
\end{gather*}
which proves \eqref{eq:limsup-conv-mu}.
\\[2mm]
\underline{Step 3:}
From \eqref{eq:equipart-q} we obtain, using the shortcuts
$\eta=\eta(\delta^{-1}2t)$, $q=q(\eps^{-1}t)$ etc., that
\begin{align}
  &-\eps q_\eps'' + \frac{1}{\eps}W'(q_\eps)\notag \\
  =\, & (1-q)\Big(\eps \frac{4}{\delta^2}\eta'' -
  \frac{4}{\delta\sqrt{2}}\eta'(1+q)
  -\frac{1}{\eps}\eta(1-\eta)(1-q)\Big) \label{eq:mc-q}
\end{align}
is exponentially small in $\eps>0$.
For the distance function we have 
\begin{gather}
  (\Delta d)(\gamma(s)+t\nu(s))\, =\,
  \frac{\kappa(s)}{1+t\kappa(s)} \label{eq:curv-d}
\end{gather}
and for the diffuse mean curvature we obtain
\begin{gather}
  -\eps\Delta u_\eps + \frac{1}{\eps}W'(u_\eps) \,=\, -\eps q_\eps'' +
  \frac{1}{\eps}W'(q_\eps) + \eps q_\eps' \Delta
  d. \label{eq:curv-limsup}
\end{gather}
Therefore
\begin{align}
  & \int_{B_1(0)} \big(-\eps \Delta u_\eps + \frac{1}{\eps}W'(u_\eps)\big) |\nabla
  u_\eps|\\
  =\,& \int_0^L \int_{-\delta}^\delta \Big(-\eps q_\eps''(t) +
  \frac{1}{\eps}W'(q_\eps(t)) + \eps
  q_\eps'(t)\frac{\kappa(s)}{1+t\kappa(s)} \Big) q_\eps'(t)
    (1+t\kappa(s))\,dt\,ds \notag\\
  =\,&   \Big(\int_0^L \kappa(s)\,ds\Big)\int_{-\delta}^\delta \eps
  q_\eps'(t)^2\,dt \notag\\
  & + 2\Big(\int_0^L \kappa(s)\,ds\Big)\int_{\frac{\delta}{2}}^{\delta}
  \Big(-\eps q_\eps''(t) +  \frac{1}{\eps}W'(q_\eps(t))\Big) tq_\eps'(t)\,dt.
\end{align}
Since $\gamma$ is closed and simple we have $\int_0^L
\kappa(s)\,ds=2\pi$. Therefore
\begin{align}
  T_\eps(u_\eps) - 2\pi \,&=\, -\frac{4\pi}{c_0}\int_\delta ^\infty \eps
  q_\eps'(t)^2\,dt + \frac{4\pi}{c_0}\int_{\frac{\delta}{2}}^{\delta}
  \Big(-\eps q_\eps''(t) +  \frac{1}{\eps}W'(q_\eps(t))\Big)
  tq_\eps'(t)\,dt \notag\\
  &=:\, R_\eps^{(T)} \label{eq:approx-T}
\end{align}
and similarly as above one shows that this term is exponentially small in
$\eps>0$. 
\\[2mm]
\underline{Step 4:}
As above we deduce that
\begin{align}
  &\Beps(u_\eps)\notag\\
  =\,& \frac{1}{c_0} \int_0^L \int_{-\delta}^\delta
  \frac{1}{\eps}\Big(-\eps q_\eps''(t) + 
  \frac{1}{\eps}W'(q_\eps(t)) + \eps
  q_\eps'(t)\frac{\kappa(s)}{1+t\kappa(s)} \Big)^2
    (1+t\kappa(s))\,dt\,ds \notag\\
  =\,& \frac{1}{c_0} \int_0^L 
  \int_0^{\delta} {\eps}q_\eps'(t)^2
  \kappa(s)^2\Big(\frac{1}{1+t\kappa(s)}+\frac{1}{1-t\kappa(s)}
  \Big)\,dt\,ds \notag\\
  &+ \int_0^L \int_0^{\delta} \Big(-\eps
  q_\eps''(t)+\frac{1}{\eps}W'(q_\eps(t))\Big)^2
  \frac{1}{\eps}(1+t\kappa(s))\,dt\,ds \notag\\
  =\,& \Big(\int_0^L \kappa(s)^2\,ds\Big)\frac{2}{c_0}\int_0^{\frac{\delta}{2\eps}}
  q'(t)^2\,dt \notag\\
  &+ \eps^2 \frac{2}{c_0}\int_0^L\int_0^{\delta} q'(t)^2
  t^2\kappa(s)^2 \frac{1}{1-\eps^2 t^2\kappa(s)^2}\,dt\,ds \notag\\
  &+\frac{2}{c_0} \int_0^L 
  \int_{\frac{\delta}{2}}^{\delta} \frac{1}{\eps}q_\eps'(t)^2
  \kappa(s)^2\Big(\frac{1}{1+t\kappa(s)}+\frac{1}{1-t\kappa(s)}
  \Big)\,dt\,ds \notag\\
  & + \int_0^L \int_0^{\delta} \Big(-\eps
  q_\eps''(t)+\frac{1}{\eps}W'(q_\eps(t))\Big)^2
  \frac{1}{\eps}(1+t\kappa(s))\,dt\,ds \label{eq:limsup-B1}
\end{align}
The last two terms on the right-hand side are exponentially small and we
finally obtain \eqref{eq:limsup-B} with $R_\eps^{(B)}=O(\eps^2)$. 
\end{proof}
We next can prove the general case.
\begin{proposition}\label{prop:limsup}
Let $\gamma\in \overline{M_L}$. Then the same conclusions as in Lemma \ref{lem:limsup} hold.
\end{proposition}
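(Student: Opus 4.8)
The plan is to reduce everything to the regular case, Lemma~\ref{lem:limsup}, via the approximation Proposition~\ref{prop:alt-char} and a diagonal argument. By Proposition~\ref{prop:alt-char} fix a sequence $(\gamma_k)_{k\in\N}$ of simple closed $C^2$-curves, parametrized by arclength, with $\gamma_k([0,L])\subset B_1(0)$ and $\gamma_k\to\gamma$ strongly in $W^{2,2}([0,L])$; in particular each $\gamma_k\in M_L$. Since $W^{2,2}([0,L])$ embeds compactly into $C^1([0,L])$ we get $\gamma_k\to\gamma$ in $C^1$, and since $\gamma_k''\to\gamma''$ strongly in $L^2([0,L])$ we get $\Bc(\gamma_k)\to\Bc(\gamma)$. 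We also record that $\int_0^L\eta(\gamma_k(s))\,ds\to\int_0^L\eta(\gamma(s))\,ds$ for every $\eta\in C^0_c(\R^2)$; for a non-simple limit $\gamma$ the measure $\Ha^1\lfloor\gamma$ in \eqref{eq:limsup-conv-mu} is to be read, as in \eqref{eq:mu-gamma}, as the image of Lebesgue measure on $[0,L]$ under $\gamma$.

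For each fixed $k$, Lemma~\ref{lem:limsup} provides smooth phase fields $u^{(k)}_\eps:B_1(0)\to[-1,1]$ satisfying \eqref{eq:bdry-cdt} such that the associated diffuse interface measures $\mu^{(k)}_\eps$ converge to $c_0\Ha^1\lfloor\gamma_k$ as $\eps\to0$ and
\begin{gather*}
  L_\eps(u^{(k)}_\eps)=L+R^{(L)}_{\eps,k},\quad T_\eps(u^{(k)}_\eps)=2\pi+R^{(T)}_{\eps,k},\quad \Beps(u^{(k)}_\eps)=\Bc(\gamma_k)+R^{(B)}_{\eps,k},
\end{gather*}
where for each fixed $k$ the remainders $R^{(L)}_{\eps,k},R^{(T)}_{\eps,k}$ are exponentially small and $R^{(B)}_{\eps,k}=O(\eps^2)$ as $\eps\to0$ --- but with implied constants and admissible range of $\eps$ depending on $k$, through the width $\delta_k>0$ of the tubular neighbourhood of $\gamma_k$ on which the signed distance is $C^2$ and through bounds on the curvature of $\gamma_k$. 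Fix a countable set $\{\eta_j\}_{j\in\N}\subset C^0_c(\R^2)$ dense for uniform convergence. For each $k$ choose $\eps_k\downarrow0$ so small that for all $0<\eps<\eps_k$
\begin{gather*}
  |R^{(L)}_{\eps,k}|+|R^{(T)}_{\eps,k}|\le \eps^{1+\frac12\max(\alpha,\beta)},\qquad |R^{(B)}_{\eps,k}|\le \tfrac1k,\qquad \Big|\mu^{(k)}_\eps(\eta_j)-c_0\!\int_0^L\!\eta_j(\gamma_k)\,ds\Big|\le\tfrac1k\ \ (j\le k),
\end{gather*}
which is possible since for fixed $k$ all these quantities tend to $0$ as $\eps\to0$. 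Define $k(\eps):=k$ for $\eps\in[\eps_{k+1},\eps_k)$, so $k(\eps)\to\infty$ as $\eps\to0$, and set $u_\eps:=u^{(k(\eps))}_\eps$.

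It remains to check the conclusions. Writing $R^{(L)}_\eps:=R^{(L)}_{\eps,k(\eps)}$ and likewise for $R^{(T)}_\eps,R^{(B)}_\eps$, the bounds above hold with $k=k(\eps)$ for all small $\eps$, so $\eps^{-\alpha}(R^{(L)}_\eps)^2\le\eps^{2}\to0$, $\eps^{-\beta}(R^{(T)}_\eps)^2\le\eps^{2}\to0$, while $R^{(B)}_\eps\to0$ and $\Bc(\gamma_{k(\eps)})\to\Bc(\gamma)$; hence $\Beps(u_\eps)\to\Bc(\gamma)$ and, for any fixed $\alpha,\beta>0$, $\F_\eps(u_\eps)\to\Bc(\gamma)$. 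For the measure convergence the mass bound $\mu_\eps(B_1(0))=c_0(L+R^{(L)}_\eps)$ gives precompactness, and for fixed $j$, once $k(\eps)\ge j$,
\begin{gather*}
  \Big|\mu_\eps(\eta_j)-c_0\!\int_0^L\!\eta_j(\gamma)\,ds\Big|\le\Big|\mu^{(k(\eps))}_\eps(\eta_j)-c_0\!\int_0^L\!\eta_j(\gamma_{k(\eps)})\,ds\Big|+c_0\Big|\int_0^L\!\big(\eta_j(\gamma_{k(\eps)})-\eta_j(\gamma)\big)\,ds\Big|,
\end{gather*}
whose first term is $\le1/k(\eps)\to0$ and whose second tends to $0$ by the uniform convergence $\gamma_k\to\gamma$; density of $\{\eta_j\}$ then yields $\mu_\eps\to c_0\Ha^1\lfloor\gamma$, with support in $\overline{B_1(0)}$ since each $u_\eps$ lives on $B_1(0)$.

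The only genuinely new difficulty, compared with Lemma~\ref{lem:limsup}, is the one just flagged: when $\gamma$ touches itself the approximating curves $\gamma_k$ have tubular neighbourhoods collapsing as $k\to\infty$ ($\delta_k\to0$), so the one-dimensional profile construction is valid only for $\eps$ small depending on $k$; letting $k(\eps)\to\infty$ slowly enough (equivalently $\eps_k\downarrow0$ fast enough) repairs this. It is also why in the general case one cannot assert a fixed exponential/quadratic rate for the remainders but only the (amply sufficient) decay $R^{(L)}_\eps,R^{(T)}_\eps=o(\eps^{1+\max(\alpha,\beta)/2})$ and $R^{(B)}_\eps\to0$.
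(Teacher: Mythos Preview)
Your argument is correct and follows exactly the route taken in the paper: approximate $\gamma$ by curves $\gamma_k\in M_L$ via Proposition~\ref{prop:alt-char}, apply Lemma~\ref{lem:limsup} to each $\gamma_k$, and pass to a diagonal sequence. The paper merely sketches this (``Choosing now a suitable diagonal sequence proves the claim''), whereas you spell out the choice of $\eps_k$ and verify the measure convergence and the penalty terms explicitly; your closing remark that the precise exponential/$O(\eps^2)$ rates are lost in the limit (only enough decay to kill the $\eps^{-\alpha},\eps^{-\beta}$ penalties survives) is a useful clarification the paper leaves implicit.
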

\begin{proof}
By Proposition \ref{prop:alt-char} we can approximate $\gamma$ strongly in $W^{2,2}([0,L])$ by a sequence of closed simple $C^2$-curves $(\gamma_k)_{k\in \N}$ that satisfy \eqref{eq:appro2} and \eqref{eq:appro3}. In particular $\gamma_k \in M_{L}$ and since $\gamma_k \to\gamma$ strongly  in $W^{2,2}([0,L])$ 
\begin{gather}
  \Bc(\gamma_k)\,\to\, \Bc(\gamma)
\end{gather}
as $k\to\infty$.
Lemma \ref{lem:limsup} yields a sequence of functions $(u_{\eps,k})_{k\in \N}$ that satisfy \eqref{eq:bdry-cdt} and
\begin{gather*}
   \F_\eps(u_{\eps,k})\,\to\, \Bc(\gamma_k)
 \end{gather*}
as $\eps\to 0$. 
Choosing now a suitable diagonal sequences proves the claim.
\end{proof}

%=====================================================
% Numerics
%=====================================================
\section{Numerical simulations}
\label{sec:numerics}
In  order to demonstrate the feasibility of the above phase field approach to model confined elastic
curves we present some numerical results. To be exact, we use a finite element approach
to discretize a viscous gradient flow of the energy $\F_\eps$, after some modifications described below,
in space and advance the equation in time using a first order fully implicit scheme. For other numerical
approaches to a diffuse interface approximation of constrained Willmore flow see for example~\cite{Lowengrub_09a, DuWang05, DuWang04}.

\subsection{Evolution equation in the numerical simulations}
As it turns out, for finite epsilon, the numerical method does not always yield a perfect transition layer.
For large prescribed length $L$ it can be energetically favorable to not follow the optimal profile
of the transition layer---thus increasing the value of the diffuse length functional---in cases where two transistion
layers were close together. It was therefore necessary to introduce a penalty for the discrepancy of the
phase field to the optimal profile. This term is
\begin{gather*}
   M_\eps(u) = \sigma_\textrm{mis} \int_{B_1(0)} \left( \frac{\eps}{2}\left|\nabla u \right|^2 - \frac{1}{\eps} W(u)\right)^2.
\end{gather*}
It is evident from the proof of Lemma~\ref{lem:limsup} that the addition of such a term does not change the
construction of the recovery sequence and simply vanishes in the limit of small $\eps$ if $\sigma$ 
scales as some power of $\frac{1}{\eps}$.

Unfortunately, the non-differentiability of the factor $\left|\nabla u\right|$ in the diffuse winding number proves to be another problem for the
gradient flow. Its gradient yields $\frac{\nabla u}{\left| \nabla u\right|}$, so the second derivative blows up where $\left|\nabla u\right|$ 
vanishes. Using the fact that the discrepancy of the phase field and the optimal profile have to vanish, we have
\begin{gather*}
\left|\nabla u\right| = \frac{\sqrt{2}}{\eps} \sqrt{W(u)} =  \frac{1}{\sqrt{2}\eps} \left| 1-u^2\right|.
\end{gather*}
The second derivative---which is necessary for the Newton-Raphson iteration used in the implicit time integration---of this term still blows up when $u=\pm 1$, however, the phase field should remain in the
interval $[0,1]$. For the computation, we thus simply leave out the absolute value in this term and observe that the phase
field behaves nicely in the simulation.

In conclusion, we numerically compute the viscous gradient flow of the energy
\begin{gather}
\label{eq:numeq}
\overline{\mathcal{F}_\varepsilon}(u) = \Beps(u) + \eps^{-\alpha} \left( L_\eps(u) - L \right)^2 + 
c_\beta \eps^{-\beta} \left( \overline{T}_\eps(u) - T \right)^2 + M_\eps(u),
\end{gather}
where 
\begin{gather*}
\overline{T}_\eps(u) = \frac{1}{c_0} \int_{B_1(0)} \Big(-\eps\Delta u +
  \frac{1}{\eps}W(u)\Big) \frac{1}{\sqrt{2}\eps}\left(1-u^2\right).
\end{gather*}
The boundary conditions are clamped, i.e., $u=-1$ on the boundary of the domain
and the normal derivative of $u$ on the boundary vanishes.

\subsection{Numerical method}
The space discretization of~\eqref{eq:numeq} requires some care, since its weak
formulation requires $u$ to be in $H^2(B_1(0))$, making it impossible to use
a piecewise linear interpolation directly. While there are several options to
resolve this problem, we resort to using a conforming, i.e., continuously
differentiable finite element discretization. To this end, we construct basis
functions derived
from Loop subdivision surfaces, which can be thought of as a
generalization of multivariate splines to tessellations of arbitrary
topology~\cite{Loop_92a}. The use of subdivision
surfaces for this problem has been suggested in~\cite{Cirak_00a}, 
where one can also find a description of convergence properties.
In addition, we use the method
described in~\cite{Biermann_00a} to fix the clamped boundary
conditions. The computational domain is a disk of radius one, 
discretized using \textsf{distmesh}~\cite{Persson_04a}.
In order to advance the system in time, we use a simple first order implicit
Euler scheme, since accuracy of the time integration is not our
primary concern. 

\subsection{Simulation parameters and results}
\begin{table}
\centering
\begin{tabular}{lllll}
\bf Name & \bf Length & \bf Winding Number & \bf Mismatch &  \bf Length \\
\bf & \bf Constraint& \bf Constraint& \bf Penalty&\bf Target \\ 
Circle 1 & off & off & off & n/a \\
Circle 2 & off & on & on & n/a \\
Relaxation &  on & on &on & $8.7838$ \\
Topology 1 & on & off &on & $8.7838$ \\
Topology 2 & on & on &on & $8.7838$ \\
\end{tabular}
\caption{The parameters for the numerical experiments}
\label{tab:simulations}
\end{table}
We use a triangulation of the domain consisting of 17\,813 faces. The transition
length $\eps$ is kept fixed at 0.025, much larger values produced a significant
mesh effect. The parameters $\alpha$ and $\beta$ are fixed at 2 and $c_\beta = 3$. The mismatch
penalization $\sigma_\textrm{mis}$ is $0.02\varepsilon^{-2}$.

For the numerical method it is essential to impose initial conditions that already are close
to an optimal profile of a simple closed curve. To generate such initial conditions, we take
black and white image to represent the interior and exterior of the initial curve, apply a
Gaussian blur and use the grayscale data as the initial function values. It is then necessary
to relax this initial condition in order for it to be close enough to an optimal profile
for the penalty terms to make sense. We thus, in the beginning, chose a small timestep ($10^{-6}$),
and slowly increase the penalization of the length- and the topological constraint. The initial conditions
are plotted after this relaxation phase which lasts 200 timesteps.
The timestep
is then increased to the regular value of $10^{-5}$. 
In addition, we slowly increase the target value $L$ for the length constraint, starting at
the value of the diffuse length functional at the initial condition (after relaxation). For comparison,
we also provide some simulation results lengh or winding number constraints.

In the following, we briefly describe the simulation results. The parameters for
the various simulations are indicated in Table~\ref{tab:simulations}. There, ``on'' for a penalty term
means that the respective term is used as in equation~\eqref{eq:numeq}. ``Off'' means the term is not
present in the energy used for the computation.
\begin{figure}
\centering
\subfigure[Time vs. radius plot]{
\includegraphics[width=0.45\textwidth]{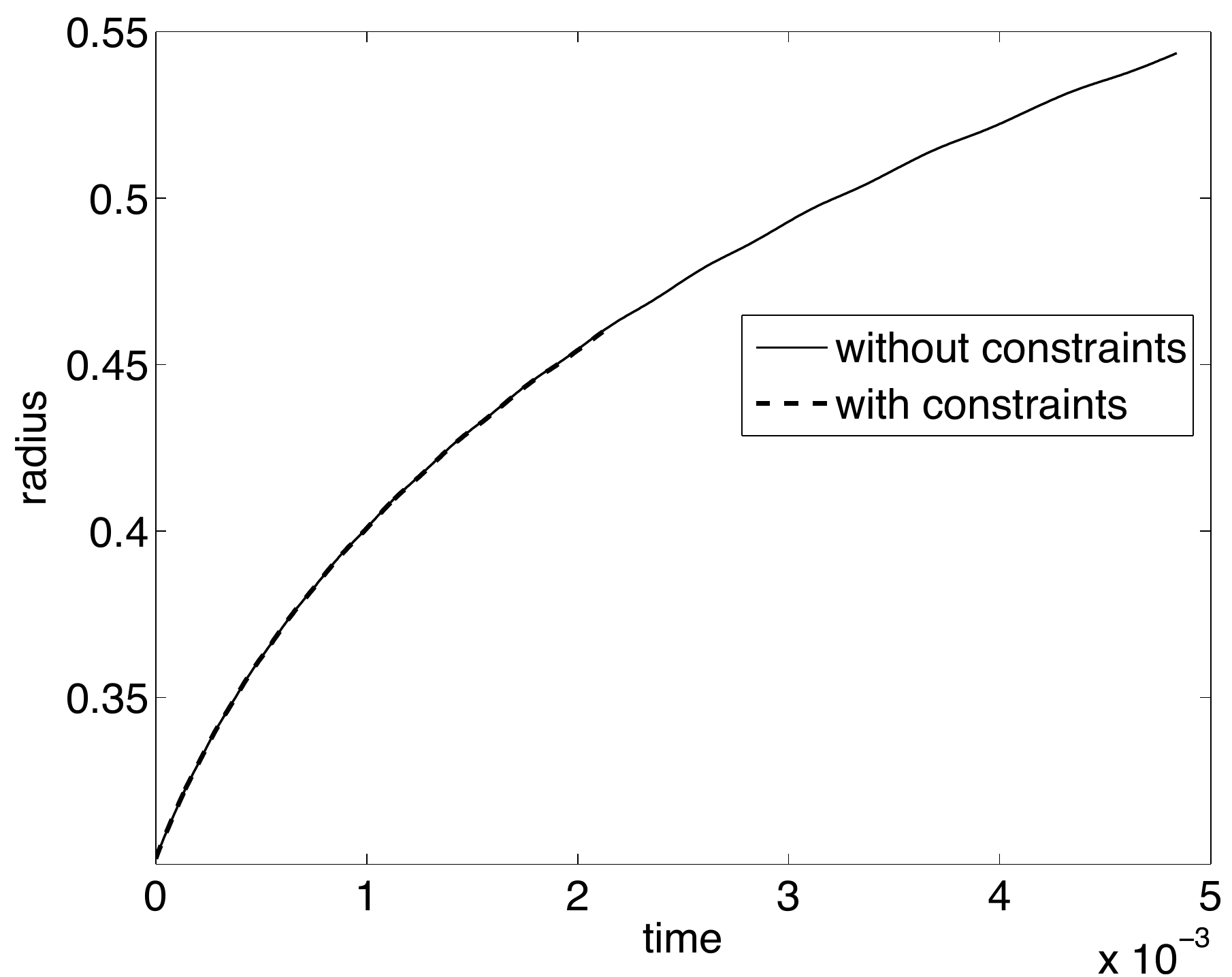}
\label{fig:circ_time_radius}
}
\subfigure[Difference between the expanding circle solution with and without winding number and mismatch penalization]{
\includegraphics[width=0.45\textwidth]{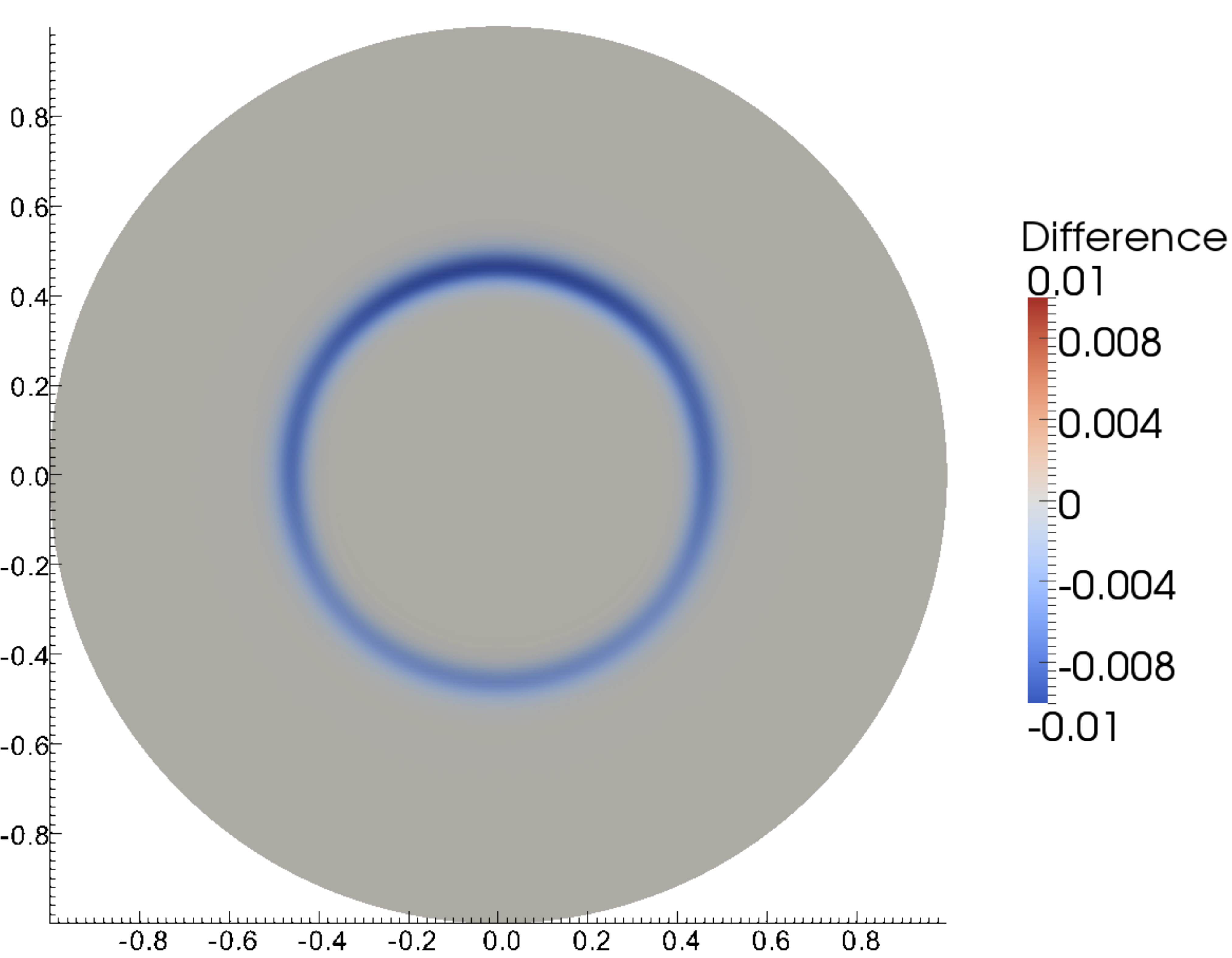}
\label{fig:circ_diff}
}
\caption{Expanding circle}
\label{fig:sim_circle}
\end{figure}

\begin{figure}
\centering
\subfigure[Initial condition (after initial relaxation)]{
\includegraphics[width=0.45\textwidth]{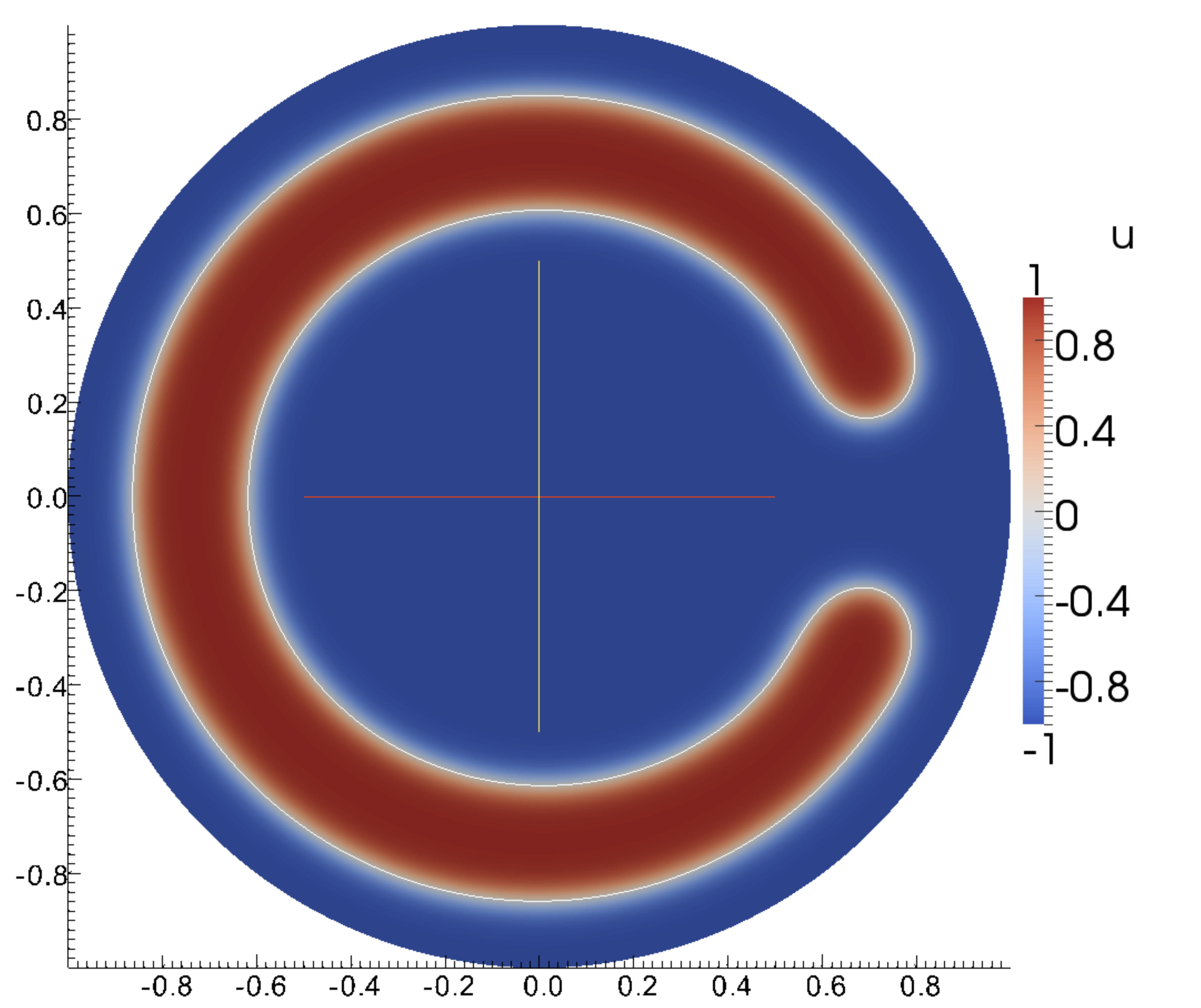}
\label{fig:sym1_initial}
}
\subfigure[Zero level sets of the intermediate stages, the arrow indicates the direction of the flow.]{
\includegraphics[width=0.4\textwidth]{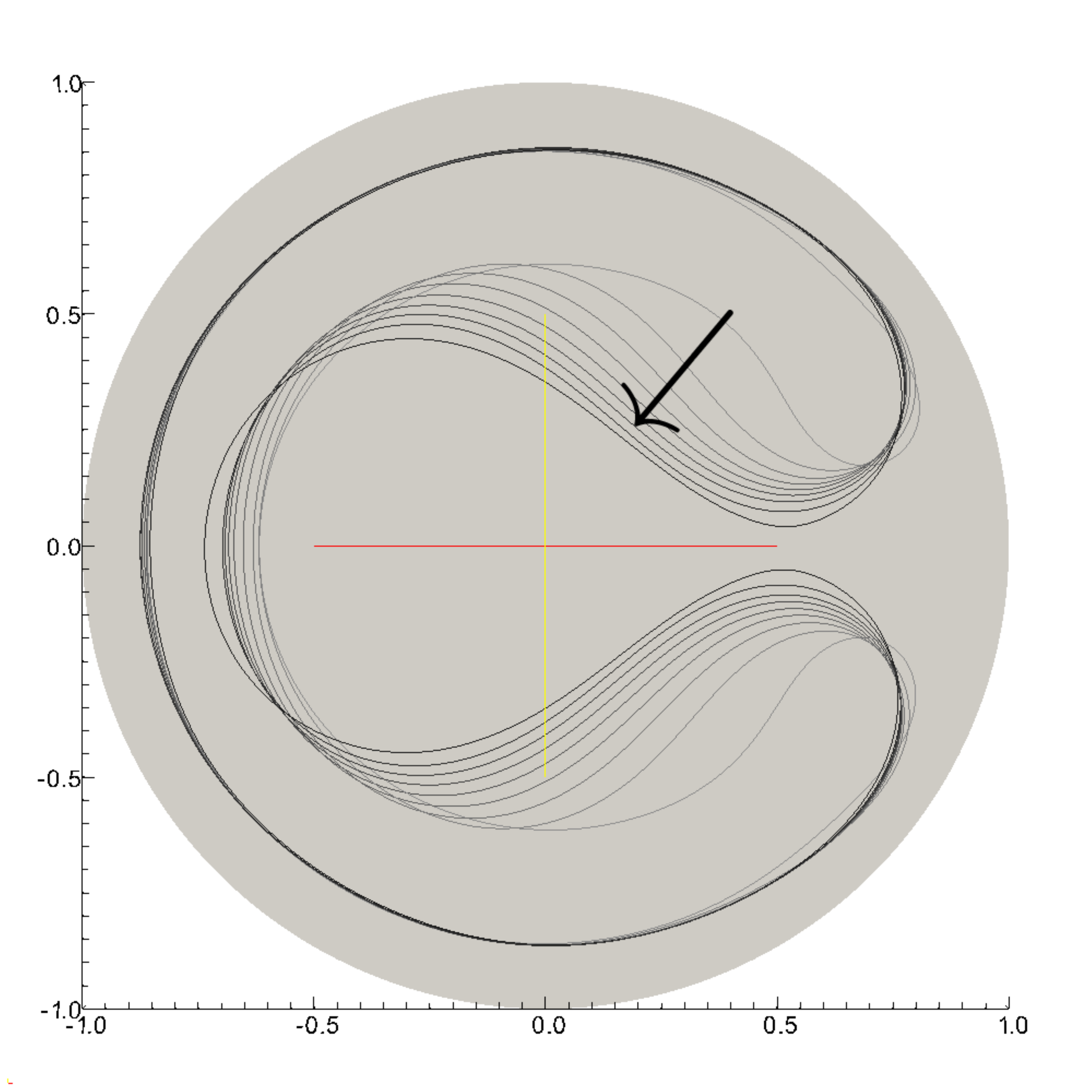}
\label{fig:sym1_intermed}
}
\subfigure[Zero level set of the equilibrium state. Elastic energy density is shown as overlay.]{
\includegraphics[width=0.45\textwidth]{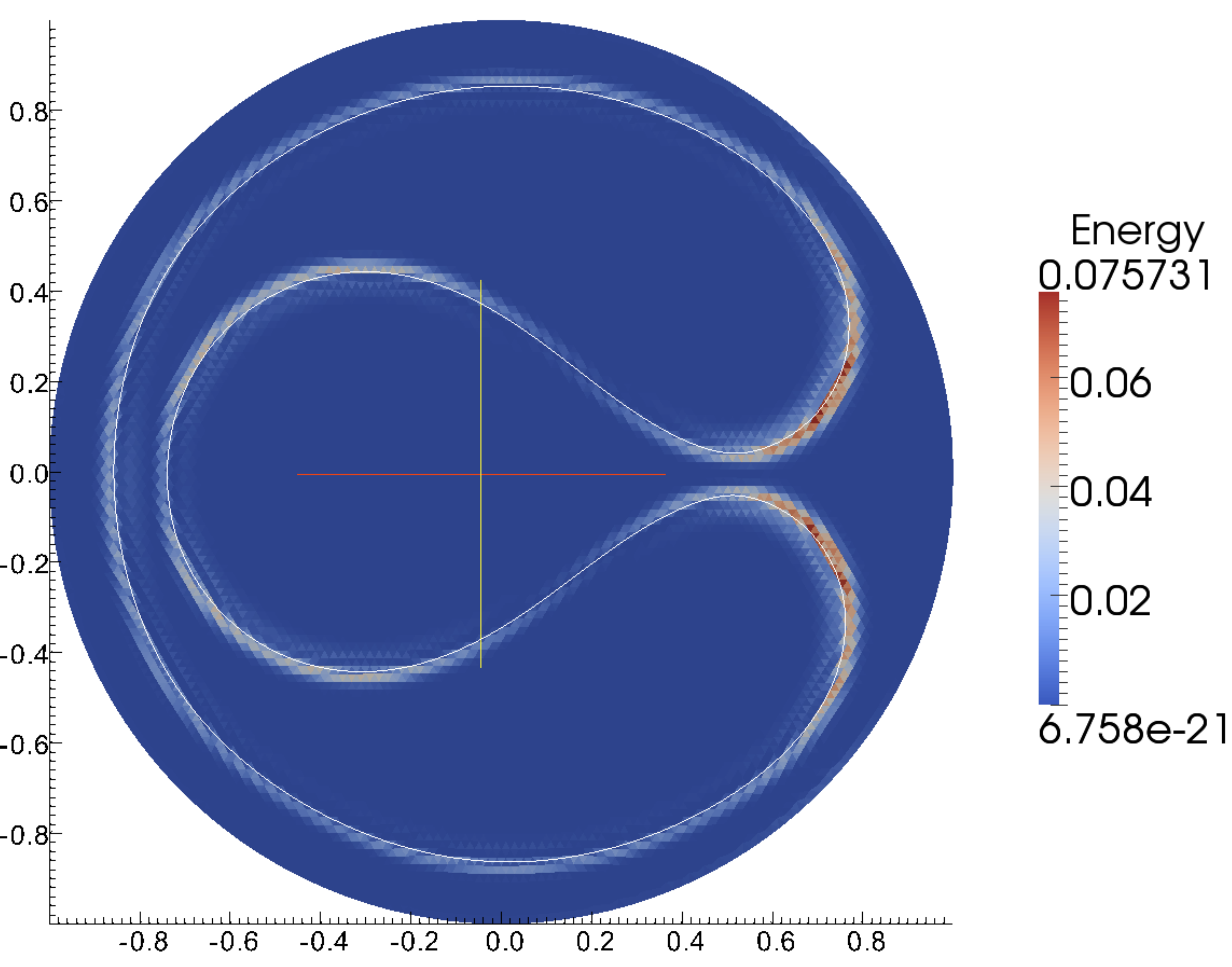}
\label{fig:sym1_final}
}
\caption{Mirror symmetric initial condition}
\label{fig:symmetry1}
\end{figure}

\begin{figure}
\centering
\subfigure[Initial Condition (after initial relaxation)]{
\includegraphics[width=0.45\textwidth]{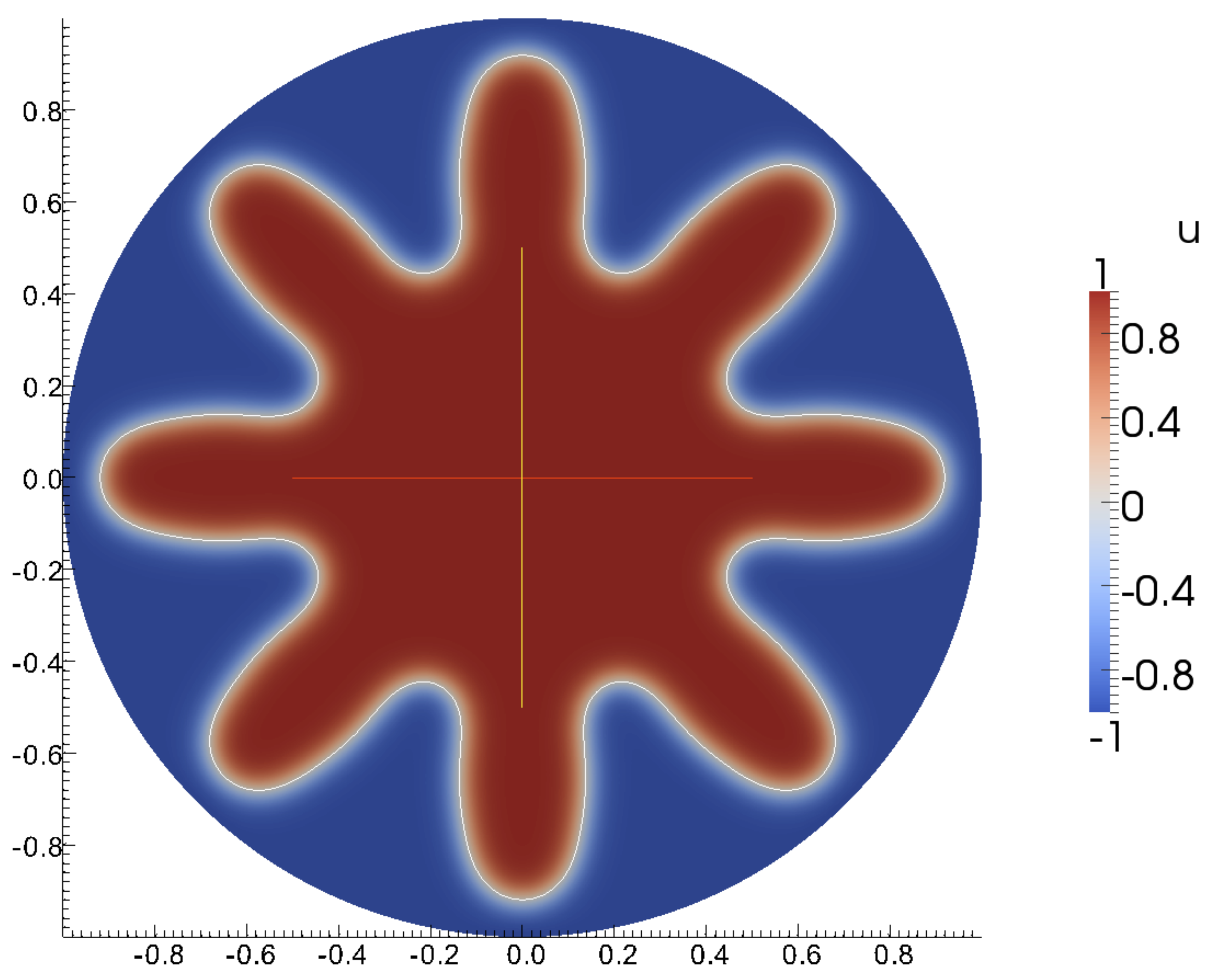}
\label{fig:sym2_initial}
}
\subfigure[Critical stage of topological transition without topological constraint. Diffuse winding number is shown as overlay.]{
\includegraphics[width=0.45\textwidth]{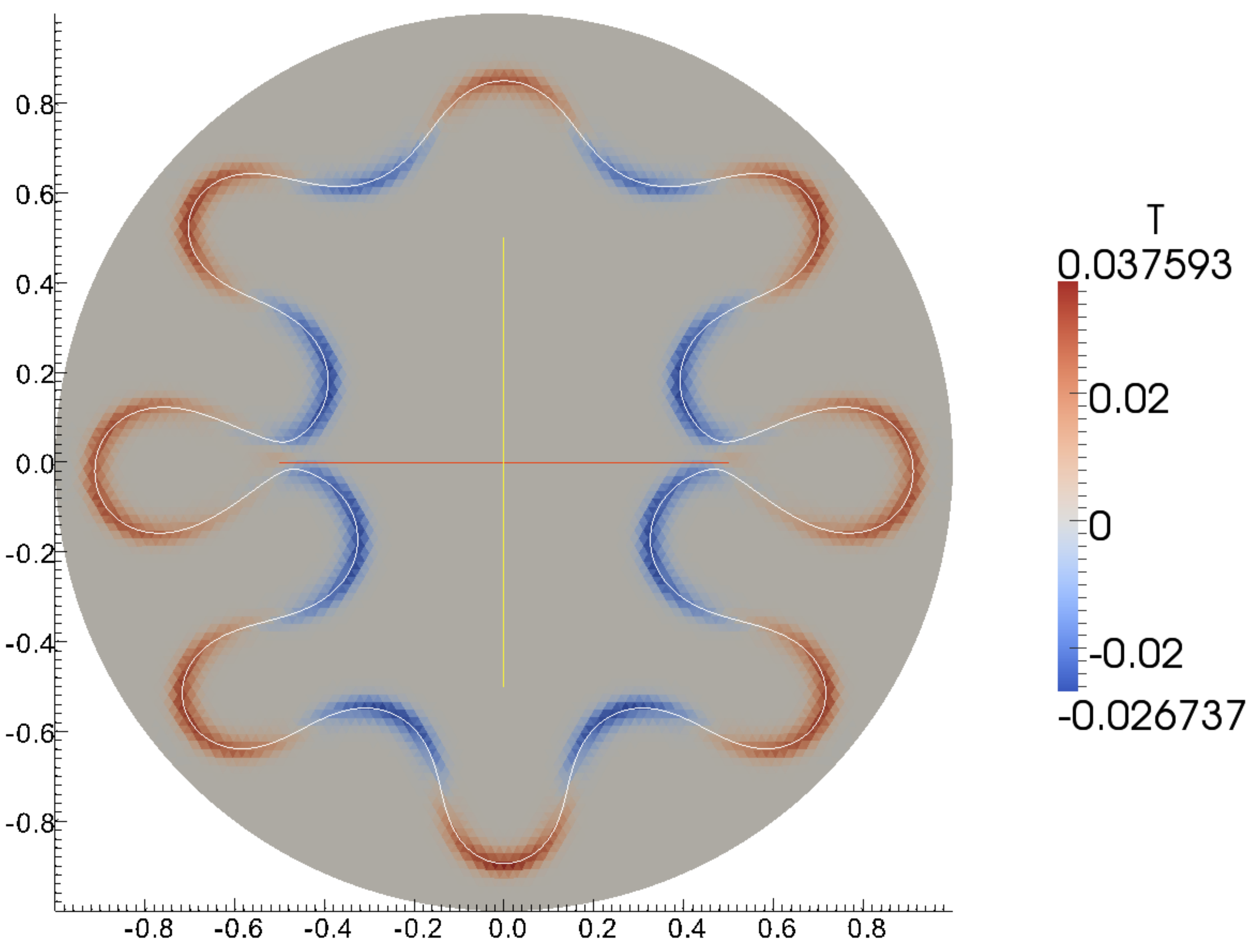}
\label{fig:sym2_noT_crit}
}
\subfigure[Critical stage of topological transition with topological constraint. Diffuse winding number is shown as overlay.]{
\includegraphics[width=0.45\textwidth]{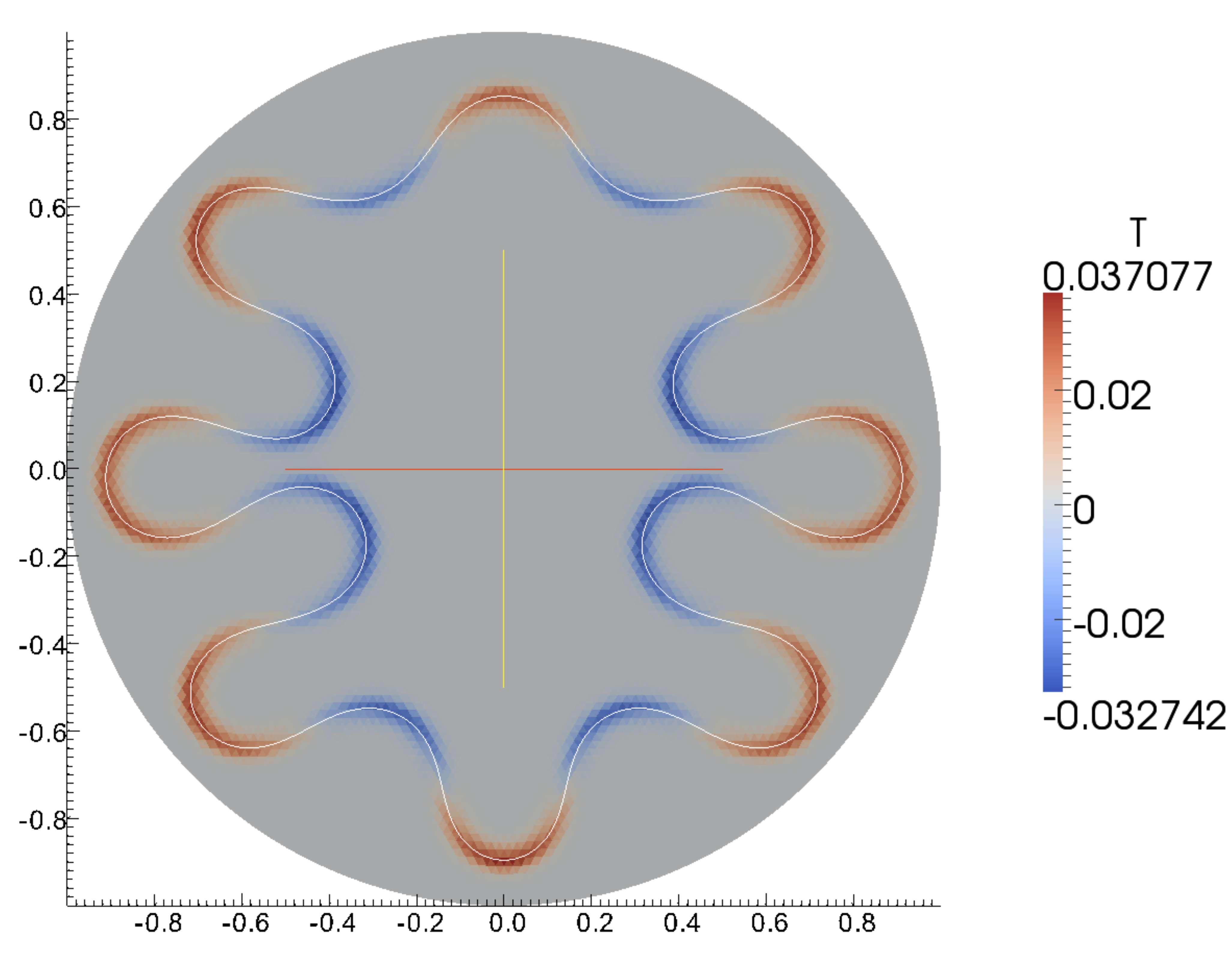}
\label{fig:sym2_crit}
}
\subfigure[Past the critical stage of topological transition without topological constraint. Diffuse winding number is shown as overlay.]{
\includegraphics[width=0.45\textwidth]{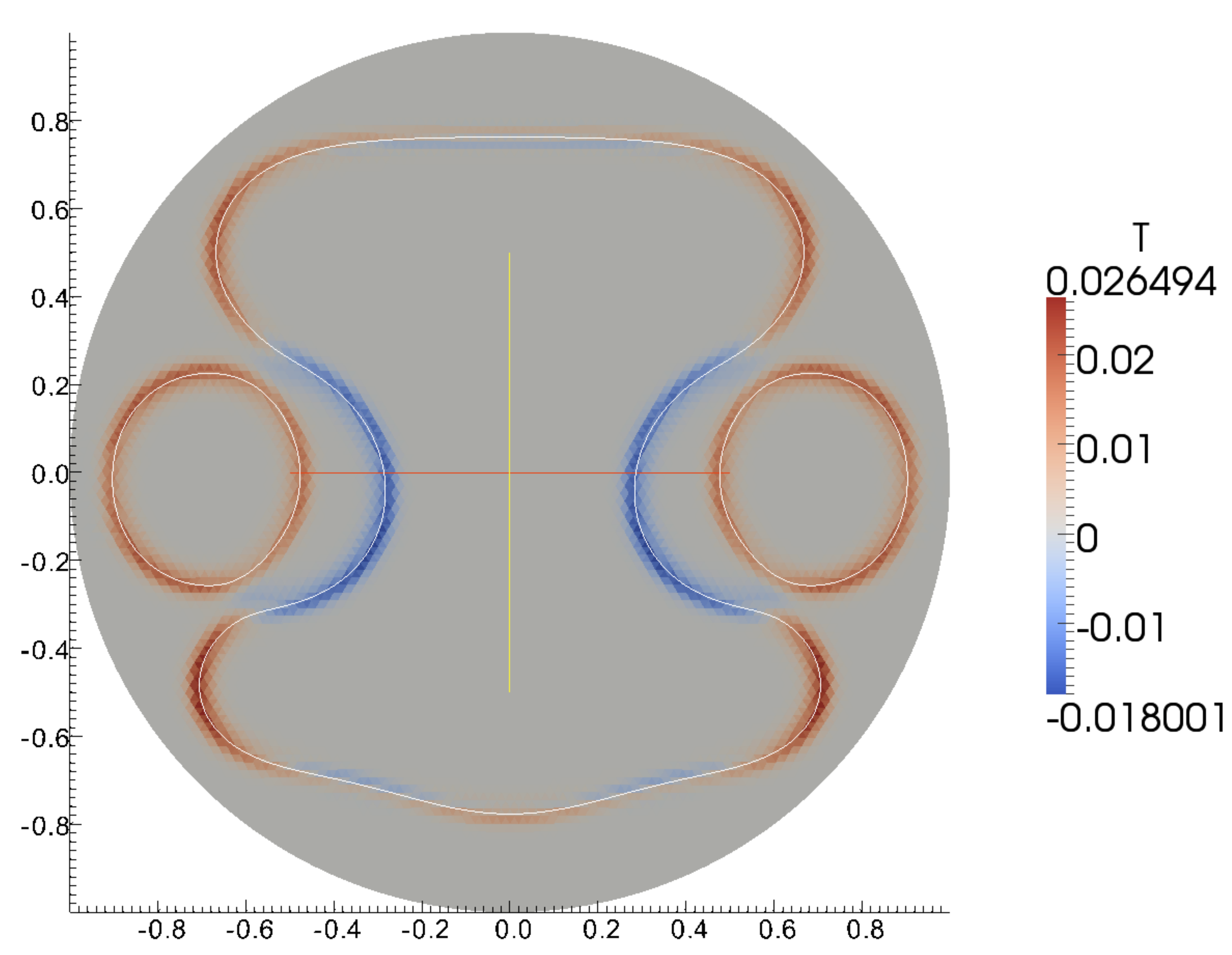}
\label{fig:sym2_noT_past}
}
\subfigure[Past the critical stage of topological transition with topological constraint. Diffuse winding number is shown as overlay.]{
\includegraphics[width=0.45\textwidth]{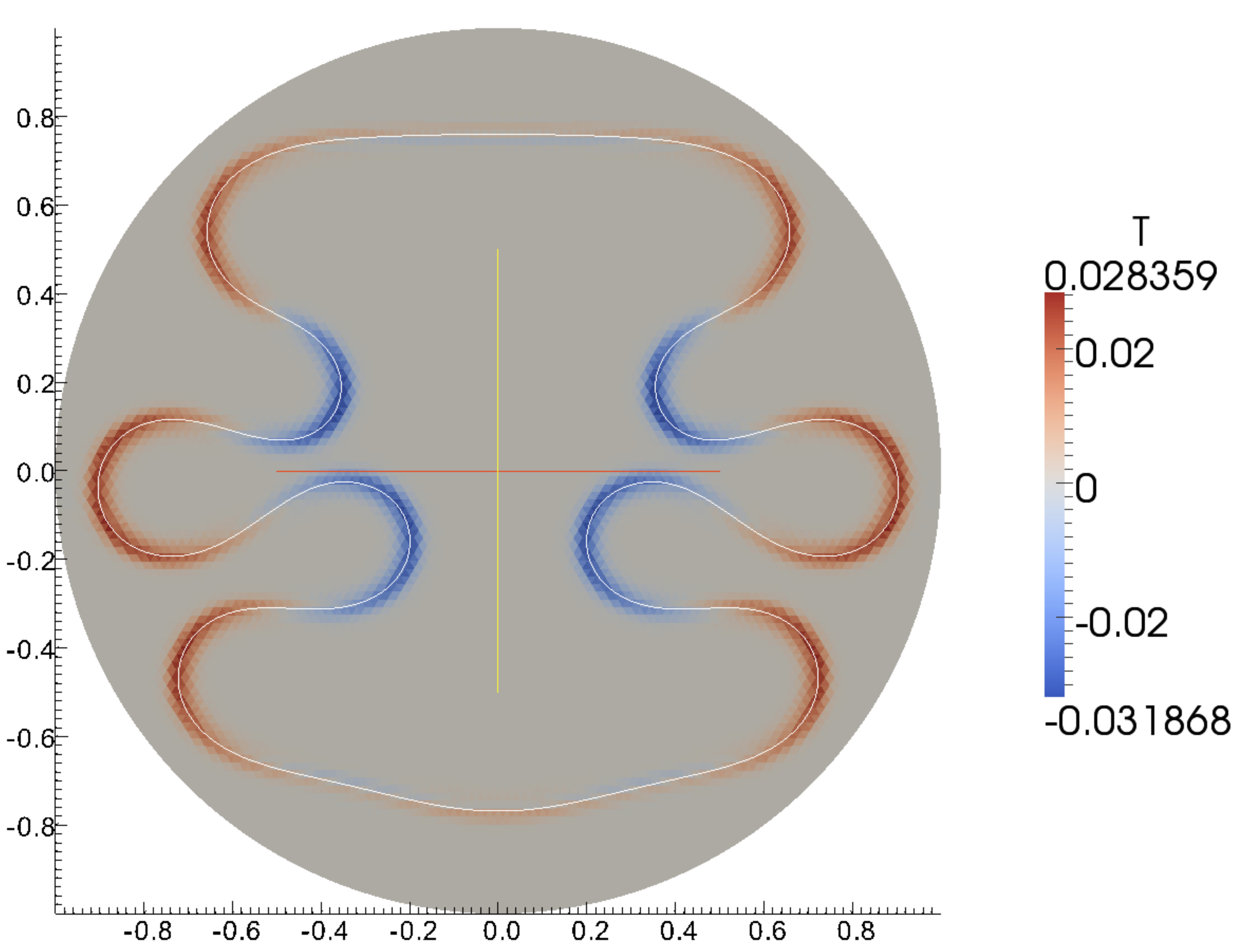}
\label{fig:sym2_past}
}

\caption{Rotationally symmetric initial condition}
\label{fig:symmetry2}
\end{figure}

\paragraph{\bf Expanding circle (Circle 1--2)}
Figure~\ref{fig:sim_circle} shows the expansion of a circle of initial
radius $0.3$ with and without length constraint. While it is clear that the phase field approximation of Euler's elastica energy $\Beps$ alone
gives a good approximation for the Willmore flow of a radially symmetric initial condition, one can see from the radius-vs.-time plot that the
topological constraint does not influence this rate of expansion. The difference of the phase field of the two simulations
(with and without topological and mismatch penalty) is shown in Figure~\ref{fig:circ_diff}. Note that the maximum of
the deviation is small compared to $1$.

\paragraph{\bf Relaxation of a folded mirror-symmetric structure (Relaxation)}
It is clear that the gradient flow routine will only find local minima of the energy, and it stands to
reason that there are many such local minima. We want to investigate the relaxed energy of the gradient flow with initial
condition shown in Figure~\ref{fig:sym1_initial}. Figure~\ref{fig:sym1_intermed} illustrates the evolving surface.
The final relaxed state, with its diffuse energy overlaid, can be seen in Figure~\ref{fig:sym1_final}. The final energy is $33.6$.

\paragraph{\bf Topological transition (Topology 1--2)}
We investigate the effectiveness of the penalization of the diffuse winding number as it differs from $2\pi$. To this end, we
start a simulation with fairly high energy in the state illustrated in Figure~\ref{fig:sym2_initial}. Figures~\ref{fig:sym2_noT_crit} 
and~\ref{fig:sym2_crit} show the state at $t=0.08$ for the simulation \emph{not} penalizing the diffuse winding number and penalizing
the diffuse winding number, respectively. One can clearly see that the simulation without penalization is getting close to pinching off at 
two positions. Finally, one can see that a topological transition occured in Figure~\ref{fig:sym2_noT_past}, while the curve in
Figure~\ref{fig:sym2_past} remained simply connected. Both figures are taken at $t=0.1$. The overlaid diffuse winding number functional 
in those figures clearly shows how the topological transition changes the calculated winding number integral.

%=====================================================
% appendix
%=====================================================
\begin{appendix}
%=====================================
\section{A better topological constraint}
%=====================================
\label{sec:impro}
The numerical simulations presented in this paper suggest that  the ``topology'' of the diffuse interface is  preserved along the gradient flow of $\F_\eps$ when the initial condition is well-prepared around an element of $M_L$ and $L$ is not too large with respect to the diameter of the domain. However, in general, neither the functional $\F_\eps$ nor the winding number in the sharp-interface setting enforce the correct topology, as the following example shows.

\noindent Consider 
\begin{gather}
  E\,=\,\Big(B_{1/2}(0)\cup B_{1/5}((0,3/4))\Big)\setminus B_{1/4}(0)
\end{gather}
and consider diffuse approximations $u_\eps$ obtained via the construction presented in Lemma \ref{lem:limsup}. It is then easy to see that $|T_\eps(u_\eps)-2\pi|$ is exponentially small in $\eps$. However, $\partial E$ is not in the admissible class $M_L$ as it cannot be parametrized by a single copy of $S^1$.

The reason why our topological constraint does not work properly in this example is due to the fact that $T_\eps(u)$ represents an approximation of the so called winding number, which depends on the orientation induced by $E$ on the connected components of $\partial E$. In particular connected components of $\partial E$ with opposite orientation (such as $\partial B_{1/4}(0)$ and $\partial B_{1/5}((0,3/4))$ in the example above) compensate each other, and do not contribute to the value of the winding number (and consequently to the value of $T_\eps(u)$). A possibility to avoid such problem (firstly in the sharp-interface) is the following.

Let $(\varphi,\Gamma)$ denote a couple constituted by a finite collection $\Gamma\subset B_1(0)$ of  $W^{2,2}$-regular, simple, closed and disjoint curves,  and a function $\varphi\in C^1(\Gamma,[-1,1])$ such that $\vert\nabla_\Gamma\varphi\vert\equiv 0$ on $\Gamma$, that is $\varphi$ assumes a constant value on each of the connected components of $\Gamma$. More precisely let $N\in\N$, $\gamma_i\in W^{2,2}(S^1,B_1(0))$ ($i=1,\dots,N$) be diffeomorphisms such that $(\gamma_i)\cap(\gamma_j)=\emptyset$ for $i\neq j$, and  let $\Gamma=\big((\gamma_1),\dots,(\gamma_N)\big)$   and $\varphi\equiv c_i\in [-1,1]$ on $(\gamma_i)$.  We then set
\begin{gather}\label{eq:top-constr-sh-inter}
A(\varphi,\Gamma):=\int_\Gamma \varphi\, \kappa_\Gamma\,d\Ha^1= 
\sum_{i=1}^{N}c_i\int_{(\gamma_i)}\kappa_{\gamma_i}\,d\Ha^1.
\end{gather}
Being $\gamma_1,\dots\gamma_N$ simple, regular, closed and disjoint curves, we can find $(l_1,\dots,l_N)\in \{1,2\}^N$ such that, setting $\varphi[\Gamma]:\equiv(-1)^{l_i}$ on $(\gamma_i)$, we have
\begin{gather*}
\tilde T(\Gamma):=\inf\{A(\varphi,\Gamma):~\varphi\in BV(\Gamma),\, \vert \nabla_\Gamma\varphi\vert(\Omega)=0\}=
A\big(\varphi[\Gamma],\Gamma\big)
\\
=\sum_{i=1}^{N}(-1)^{l_i}\int_{(\gamma_i)}\kappa_{\gamma_i}\,d\Ha^1=
-2\pi N.
\end{gather*}
Hence the functional $\tilde T(\Gamma)$ counts the number of connected components of $\Gamma$, without taking into account of  their orientation. It is then rather natural to look for a phase-fields approximation for $\tilde T(\Gamma)$ in order to get a  constraint on the topology of the diffuse interfaces stronger than the one obtained via $T_\eps(u)$. For this purpose we proceed as follows.
We firstly consider a sequence $A_\eps(\varphi,u)$  of functionals defined on couples $(\varphi,u)\in C^1(\Omega,[-1,1])\times C^2(\Omega)$,  and representing a diffuse interface approximation of $A(\varphi,\Gamma)$, Then, in analogy with $\tilde T(\Gamma)$, we define a  functional $\tilde T_\eps(u)$ via minimization with respect to $\varphi$ of $A_\eps(\varphi,u)$. Finally we define the new topological constraint penalizing deviations of $\tilde T_\eps(u)$ from $2\pi$. More precisely we start setting
\begin{gather}
A_{\eps}(\varphi,u)\,:=\, \eps^{\gamma}\int_{B_1(0)} |\nabla\varphi|\,dx +
  \frac{1}{\eps^\gamma} \int_{B_1(0)} \big| \nabla u^\perp \cdot
  \nabla\varphi\big| \eps |\nabla u| \,dx\notag\\
  - \frac{1}{c_0}
  \int_{B_1(0)}\Big(-\eps\Delta u + \frac{1}{\eps}W'(u)\Big)\varphi
  |\nabla u| \,dx.
  \label{eq:def-Aeps}
 \end{gather} 
$A_\eps(\varphi,\Gamma)$ formally presents a diffuse interfaces approximation of $A(\varphi,\Gamma)$. The second term in 
\eqref{eq:def-Aeps} represents a  penalization of order $\eps^{-\gamma}$ of the integral, with respect to the diffuse-length measure $\eps\vert\nabla u\vert^2\mathcal L_{\lfloor B_1(0)}$, of the variations  of $\varphi$ along the diffuse interface $\{y\in B_1(0):~\nabla u(y)\neq 0\}$. Hence this term corresponds to a relaxation (at the diffuse interface  level) of the (sharp-interface) constraint $\vert\nabla_\Gamma \varphi\vert\equiv 0$ on $\Gamma$. The third term, as  we have  already seen in \ref{eq:approx-T}, can be thought of as a phase-fields approximation of  $\int_\Gamma \varphi\,\kappa_\Gamma\,d\Ha^1$. Finally the first term, whose contribution is infinitesimal of order $\eps^\gamma$, is needed in order to ensure compactness in $BV(\Omega,[-1,1])$ when minimizing $A_\eps(\varphi,u)$ with respect to the first variable.
In fact, we remark that, fixed  $u\in C^2(\Omega)$,  we can apply the direct method of calculus of variations, and obtain
the existence of a function $\varphi[u]\in BV(\Omega,[-1,1])$ such that
\begin{gather*}
\overline{A_\eps}(\varphi[u],u)=\inf_{\varphi\in C^1(\Omega,[-1,1]) }  A_{\eps}(\varphi,u),
\end{gather*}
where $\overline{A_\eps}(\cdot,u)$ denotes the lower semi-continuous envelope of $A_\eps(\cdot,u)$ with respect to the weak convergence in $BV(\Omega,[-1,1])$. Hence we define
\begin{gather}
  \tilde{T}_\eps(u) \,=\, \frac{1}{c_0} \int_{B_1(0)} \Big(-\eps\Delta u +
  \frac{1}{\eps}W(u)\Big)\varphi[u]|\nabla u|\,dx, \label{eq:def-Teps-tilde}\\
  	\tilde{\F}_\eps(u)\,=\, \Beps(u) + \eps^{-\alpha}\Big(L_\eps(u)-L\Big)^2 +
 \eps^{-\beta}\Big(\tilde{T}_\eps(u)-2\pi\Big)^2 \label{eq:def-F-tilde}
\end{gather}
and remark that when $\varphi[u]\equiv 1$ the functional $\tilde{T}_\eps(u)$ coincides with
the diffuse winding number ${T}_\eps(u)$.

 In order to justify the choice of $\tilde T_\eps(u)$ we first show (see Lemma \ref{lem:Ttilde}) that if $(u_\eps)_{\eps>0}$ is as in Lemma \ref{lem:limsup} the value of $\tilde{T}_\eps(u_\eps)$ still converges to $2\pi$ as $\eps\to 0$.
Eventually, in Proposition \ref{prop:pappa}, we analyze the behavior of  $\tilde T_\eps(\cdot)$ along sequences $\{u_\eps\}_\eps$ approximating (in an ``optimal way'') a finite collection $\Gamma$ of simple closed, disjoint curves in $B_1(0)$, and obtain that  $\tilde T_\eps(u_\eps)$ converges to $\tilde T(\Gamma)$.
\begin{lemma}\label{lem:Ttilde}
Let $u_\eps$ be as in \eqref{eq:def-u-limsup}. Then we have
\begin{gather}
  \lim_{\eps\to 0} \tilde{T}_\eps(u_\eps)\,=\, 2\pi. \label{eq:Ttil}
\end{gather}
\end{lemma}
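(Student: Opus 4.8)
The plan is to show that $\tilde{T}_\eps(u_\eps) \to 2\pi$ by comparing $\tilde{T}_\eps(u_\eps)$ directly with the already-known limit of $T_\eps(u_\eps)$ from Lemma~\ref{lem:limsup}, exploiting the fact that $\tilde T_\eps(u_\eps) = \frac{1}{c_0}\int_{B_1(0)} \big(-\eps\Delta u_\eps + \frac{1}{\eps}W(u_\eps)\big)\varphi[u_\eps]|\nabla u_\eps|$ reduces to $T_\eps(u_\eps)$ precisely when $\varphi[u_\eps]\equiv 1$. The key observation is that by \eqref{eq:approx-T} the diffuse mean curvature integral against $|\nabla u_\eps|$ already localizes on the interface $\{|d|<\delta/2\}$, where $u_\eps$ follows the optimal profile exactly; on the complement the integrand is exponentially small in $\eps$. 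So the whole question is to understand the minimizing $\varphi[u_\eps]$ for $A_\eps(\cdot,u_\eps)$ when $u_\eps$ is the single-curve recovery sequence.

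First I would insert the competitor $\varphi\equiv 1$ into $A_\eps(\cdot, u_\eps)$: the first term $\eps^\gamma\int|\nabla\varphi|$ vanishes, the second penalization term $\eps^{-\gamma}\int|\nabla u_\eps^\perp\cdot\nabla\varphi|\eps|\nabla u_\eps|$ vanishes, and the last term equals $-c_0 T_\eps(u_\eps) = -c_0(2\pi + R_\eps^{(T)})$ with $R_\eps^{(T)}$ exponentially small. Hence $\overline{A_\eps}(\varphi[u_\eps],u_\eps)\le -c_0(2\pi + R_\eps^{(T)})$. This gives an upper bound on the minimum. The heart of the argument is a matching lower bound on $A_\eps(\varphi,u_\eps)$ valid for every admissible $\varphi$: since $\varphi$ takes values in $[-1,1]$ and since, as in Step~3 of the proof of Lemma~\ref{lem:limsup}, the support of $(-\eps\Delta u_\eps+\frac1\eps W(u_\eps))|\nabla u_\eps|$ splits into the core region $\{|d|<\delta/2\}$ (where the diffuse mean curvature is exactly $q_\eps'(d)\Delta d \cdot$ (profile factor), hence has a sign governed by $\kappa$) plus an exponentially small remainder, one controls $|-\frac{1}{c_0}\int(-\eps\Delta u_\eps+\frac1\eps W)\varphi|\nabla u_\eps| - (-c_0 T_\eps(u_\eps)/c_0)|$. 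Because $\gamma$ is a \emph{single} simple closed curve, the diffuse interface is connected, so the penalization term forces $\varphi$ to be essentially constant across the core of the interface — any nonconstant $\varphi$ there costs at least order $\eps^{-\gamma}\cdot\eps\cdot(\text{jump})\cdot(\text{interface mass})$, which dwarfs the $O(1)$ gain one could hope to get in the curvature term. Thus the infimum is attained, up to exponentially small and $\eps^\gamma$-order errors, at $\varphi\equiv\pm1$, and the sign that minimizes is $\varphi\equiv 1$ because $\int_0^L\kappa\,ds = 2\pi > 0$.

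Concretely, the steps I would carry out in order: \emph{(1)} Record the decomposition of $u_\eps$ from \eqref{eq:def-u-limsup} and the localization \eqref{eq:curv-limsup}--\eqref{eq:approx-T}, noting where the diffuse mean curvature times $|\nabla u_\eps|$ is supported and where it is exponentially small. \emph{(2)} Evaluate $A_\eps(1,u_\eps)$ to get the upper bound $-c_0(2\pi + R_\eps^{(T)})$ on $\overline{A_\eps}(\varphi[u_\eps],u_\eps)$. \emph{(3)} For arbitrary $\varphi\in BV(B_1(0),[-1,1])$, estimate the penalization term from below: on the region where $|\nabla u_\eps|$ is comparable to $\eps^{-1}$ (the core of the interface), $\eps|\nabla u_\eps|$ is of order one, so $\eps^{-\gamma}\int|\nabla u_\eps^\perp\cdot\nabla\varphi|\eps|\nabla u_\eps|$ bounds $\eps^{-\gamma}$ times the total variation of $\varphi$ along the interface directions; combined with connectedness of $\gamma$ this shows that if $\varphi$ is not within $o(1)$ of a single constant on the core, then $A_\eps(\varphi,u_\eps) \gg -c_0(2\pi + R_\eps^{(T)})$, contradicting step (2). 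Hence $\varphi[u_\eps]$ is $o(1)$-close (in a suitable interface-weighted sense) to a constant $c\in[-1,1]$. \emph{(4)} Plug this near-constant structure back into $\tilde T_\eps(u_\eps) = \frac{1}{c_0}\int(-\eps\Delta u_\eps+\frac1\eps W)\varphi[u_\eps]|\nabla u_\eps|$; using \eqref{eq:approx-T} the integral equals $c\cdot(2\pi + R_\eps^{(T)}) + (\text{error from }\varphi[u_\eps]-c)$, and the minimization forces $c = 1$ in the limit (since positive winding makes $c=1$ strictly better than any $c<1$ once the curvature integral is $2\pi>0$). Taking $\eps\to 0$ yields \eqref{eq:Ttil}.

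The main obstacle I anticipate is step (3): making rigorous the claim that the second term of $A_\eps$ genuinely penalizes tangential variation of $\varphi$ \emph{along the interface} and not merely its total gradient, and quantifying "$\varphi[u_\eps]$ is close to a constant" in a way strong enough to control the final curvature integral. The weight $|\nabla u_\eps^\perp\cdot\nabla\varphi|$ only sees the component of $\nabla\varphi$ perpendicular to $\nabla u_\eps$, i.e.\ tangent to level sets of $u_\eps$ — so one must argue that the \emph{normal} variation of $\varphi$ (the one not penalized here) does not spoil the curvature integral, which works because across the $O(\eps)$-thin core the curvature factor $q_\eps'(d)$ integrates against whatever normal profile $\varphi$ has, and this contributes the same $\int_0^L\kappa\,ds$ up to the value of $\varphi$ at $d=0$, where connectedness of the single curve pins it down. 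I would handle this by a Fubini/coarea argument in the $(s,t)$-coordinates $\psi(s,t)=\gamma(s)+t\nu(s)$ from Step~1 of Lemma~\ref{lem:limsup}, reducing everything to one-dimensional estimates in $t$ for each fixed $s$, together with the connectedness in $s$. An alternative, cleaner route — which I would pursue if the quantitative estimates get unwieldy — is to directly exhibit $\varphi\equiv 1$ as (nearly) optimal by a convexity/sign argument: since the only $\eps$-uniform term in $A_\eps$ is the curvature term, and its sign on the core is fixed by $\kappa$, every admissible $\varphi$ satisfies $A_\eps(\varphi,u_\eps)\ge -c_0(2\pi) - C\eps^\gamma - (\text{exp.\ small})$ with the bound saturated only near $\varphi\equiv 1$, which is exactly what is needed.
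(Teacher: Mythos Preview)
Your overall architecture matches the paper's: an upper bound on $\min_\varphi A_\eps(\varphi,u_\eps)$ from the competitor $\varphi\equiv 1$ (which gives $-T_\eps(u_\eps)\approx -2\pi$), and a matching lower bound $A_\eps(\varphi,u_\eps)\gtrsim -2\pi$ valid for every admissible $\varphi$. The difference lies entirely in how the lower bound is obtained, and this is precisely where your proposal leaves a gap.

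Your step~(3) asks for a soft compactness statement --- that the $\eps^{-\gamma}$-penalization forces $\varphi[u_\eps]$ close to a constant along the interface --- followed by substitution into the curvature integral. You correctly flag this as the main obstacle, and you do not resolve it. The paper bypasses it entirely with one integration by parts: writing $\gamma'(s)=(\cos\alpha(s),\sin\alpha(s))$ so that $\kappa(s)=-\alpha'(s)$, one has for each fixed $t$
\[
\int_0^L \kappa(s)\,\varphi(\gamma(s)+t\nu(s))\,ds
= \int_0^L \alpha(s)\,\partial_s\big[\varphi(\gamma(s)+t\nu(s))\big]\,ds \;-\; \varphi(\gamma(0)+t\nu(0))\big(\alpha(L)-\alpha(0)\big).
\]
The boundary term is $2\pi\,\varphi(\gamma(0)+t\nu(0))$, hence its $t$-integral against $\eps q_\eps'^2$ is bounded in modulus by $2\pi c_0$; the bulk term, after identifying $\partial_s[\varphi\circ\psi]$ with $(\nabla u_\eps^\perp\cdot\nabla\varphi)(1+t\kappa)/|\nabla u_\eps|$, is bounded in modulus by $2\pi\int_{B_1(0)}|\nabla u_\eps^\perp\cdot\nabla\varphi|\,\eps|\nabla u_\eps|$ since $|\alpha|\le 2\pi$. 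This yields directly
\[
-\tfrac{1}{c_0}\int_{B_1(0)}\big(-\eps\Delta u_\eps+\tfrac1\eps W'(u_\eps)\big)\varphi\,|\nabla u_\eps|
\ \ge\ -2\pi \;-\; \tfrac{2\pi}{c_0}\int_{B_1(0)}|\nabla u_\eps^\perp\cdot\nabla\varphi|\,\eps|\nabla u_\eps|
\;+\;(\text{exp.\ small}),
\]
so the second term of $A_\eps$ dominates the unfavorable piece of the third as soon as $\eps^{-\gamma}>2\pi/c_0$, giving $A_\eps(\varphi,u_\eps)\ge -2\pi$ up to vanishing errors. No intermediate ``$\varphi$ close to a constant'' statement is needed, and no separate control of the normal variation of $\varphi$ is required.

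Your proposed alternative route (``its sign on the core is fixed by $\kappa$'') does not work as stated: for a general simple closed $C^2$-curve $\gamma\in M_L$ the curvature $\kappa$ changes sign, so there is no pointwise sign argument available. Only the \emph{integral} $\int_0^L\kappa\,ds=2\pi$ is fixed, and extracting that integral identity against a variable weight $\varphi$ is exactly what the tangent-angle integration by parts accomplishes.
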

\begin{proof}
As in the proof of Lemma \ref{lem:limsup} we calculate that up to
exponentially small term
\begin{align}
  & \int_{B_1(0)} \big(-\eps \Delta u_\eps +
  \frac{1}{\eps}W'(u_\eps)\big) \varphi |\nabla  u_\eps|\\
  \approx\,& \int_0^L \int_{-\frac{\delta}{2}}^{\frac{\delta}{2}}
  \eps q_\eps'(t)^2
  \kappa(s)\varphi(\gamma(s)+t\nu(s))\,dt\,ds. \label{eq:lem-Ttil1} 
\end{align}
Writing $\gamma'(s)=(\cos\alpha(s),\sin\alpha(s))^T$ and observing that
$\kappa(s)=-\alpha'(s)$ 
we deduce that
\begin{align}
  \int_0^L \kappa(s)\varphi(\gamma(s)+t\nu(s))\,ds\,=\,& \int_0^L
  \alpha(s)\nabla\varphi(\gamma(s)+t\nu(s))\cdot
  (1+t\kappa(s))\gamma'(s)\,ds \notag\\
  &-\varphi(\gamma(0)+t\nu(0))\big(\alpha(L)-\alpha(0)\big). \label{eq:lem-Ttil2} 
\end{align}
We next observe that
\begin{gather*}
  \gamma'(s)\,=\, \frac{\nabla u^\perp}{|\nabla u|}(\gamma(s)+t\nu(s))
\end{gather*}
and define $\alpha(x)$ as the angle in $[0,2\pi)$ such that
\begin{gather*}
  \frac{\nabla u^\perp}{|\nabla u|}(x) \,=\, \begin{pmatrix}
    \cos\alpha(x)\\\sin\alpha(x)\end{pmatrix}.
\end{gather*}
We then obtain for the first term on the right-hand side of
\eqref{eq:lem-Ttil2}
\begin{align}
  &\Big| \int_0^L \int_{-\frac{\delta}{2}}^{\frac{\delta}{2}}
  \eps q_\eps'(t)^2  \alpha(s)\nabla\varphi(\gamma(s)+t\nu(s))\cdot
  (1+t\kappa(s))\gamma'(s)\,dt \,ds \Big| \notag\\
  =\,& \Big| \int_{\{|d|<\delta/2\}} \alpha(x)\frac{\nabla u^\perp}{|\nabla
    u|}(x)\cdot \nabla\varphi(x) \eps |\nabla u|^2\,dx |\Big| \notag\\
  \leq\,& 2\pi \int_{B_1(0)} \big| \nabla u^\perp\cdot
  \nabla\varphi\big| \eps |\nabla u|. \label{eq:lem-Ttil3}
\end{align}
For the second term on the right-hand side of \eqref{eq:lem-Ttil2} we
have
\begin{align}
  \Big| \int_{-\frac{\delta}{2}}^{\frac{\delta}{2}} \eps q_\eps'(t)^2
  \varphi(\gamma(0)+t\nu(0))\big(\alpha(L)-\alpha(0)\big) \Big| \,&\leq\,
  2\pi c_0. \label{eq:lem-Ttil4}
\end{align}
This shows that for arbitrary $\varphi \in BV(B_1(0);[-1,1])$, up to
exponentially small terms in $\eps$
\begin{gather*}
  A_\eps(\varphi)\,\geq\,  \frac{1}{\eps^\gamma} \int_{B_1(0)} \big|
  \nabla u^\perp \cdot   \nabla\varphi\big| \eps |\nabla u| - 2\pi -
  2\pi \int_{B_1(0)} \big| \nabla u^\perp\cdot 
  \nabla\varphi\big| \eps |\nabla u| \,\geq\, -2\pi.
\end{gather*} 
On the other hand for $\varphi\equiv 1$ we have
\begin{gather*}
  A_\eps(\varphi)\,=\, -T_\eps(u)\,\approx\, -2\pi.
\end{gather*}
This shows that $\lim_{\eps\to 0} \tilde{T}_\eps(u)\,=\, 2\pi$, as
claimed.
\end{proof}
An application of the previous lemma shows that the improved topological constraint in the case of a finite collection of simple, disjoint curves adds up the winding numbers of each single curve. In particular, these configurations are strongly penalized by the modified functional $\tilde{\F}_\eps$.
\begin{proposition}\label{prop:pappa}
Let $E\subset\subset B_1(0)$ be an open subset with $C^2$-bounday $\partial E=\cup_{j=1}^N(\gamma_j)$ where $\gamma_j$ are $C^2$-diffeomorphisms of the unit circle. Let $\{u_\eps\}_\eps\subset C^2(\Omega)$ be  constructed as in Lemma \ref{lem:limsup}  such that in particular $u_\eps\to 2\chi_E-1$ in $L^1(B_1(0))$. Then  
\begin{gather*}
\lim_{\eps\to 0}  \tilde{T}_\eps(u_\eps) \,=\, N 2\pi.
\end{gather*}
\end{proposition}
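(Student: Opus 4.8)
The plan is to reduce everything, tube by tube, to the one–curve computation already carried out in the proof of Lemma~\ref{lem:Ttilde}, and then to close a short squeezing argument for $\inf_\varphi A_\eps(\cdot,u_\eps)$. Write $H_\eps u:=-\eps\Delta u+\tfrac1\eps W'(u)$; with the notation of \eqref{eq:def-Aeps} and \eqref{eq:def-Teps-tilde}, for $\varphi$ with values in $[-1,1]$ set $\tilde T_\eps^{\varphi}(u):=\tfrac1{c_0}\int_{B_1(0)}H_\eps u\,\varphi\,|\nabla u|\,dx$ and $I_\eps[\varphi]:=\int_{B_1(0)}|\nabla u_\eps^{\perp}\!\cdot\!\nabla\varphi|\,\eps|\nabla u_\eps|\,dx$, so that $A_\eps(\varphi,u_\eps)=\eps^{\gamma}\int_{B_1(0)}|\nabla\varphi|\,dx+\eps^{-\gamma}I_\eps[\varphi]-\tilde T_\eps^{\varphi}(u_\eps)$ and $\tilde T_\eps(u_\eps)=\tilde T_\eps^{\varphi[u_\eps]}(u_\eps)$. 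Since $(\gamma_1),\dots,(\gamma_N)$ are disjoint compact $C^2$–curves, for $\delta$ small enough their closed $\delta$–tubular neighbourhoods $\overline{U_1},\dots,\overline{U_N}$ are pairwise disjoint; I would fix such a $\delta$ in the construction \eqref{eq:def-u-limsup} of $u_\eps$ (now with $d$ the signed distance to $\partial E$, positive inside $E$), so that $\{|\nabla u_\eps|\neq0\}\subset\bigcup_j U_j$, on each $U_j$ the function $u_\eps$ is exactly the one–curve profile built from $\gamma_j$, and each $\gamma_j$ carries the orientation induced through $d$ by the inner normal $\nu_j$ of $E$.

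First, a uniform a priori upper bound. I would repeat the computation \eqref{eq:lem-Ttil1}--\eqref{eq:lem-Ttil4} on each of the $N$ disjoint tubes and sum: on $U_j$, using $\kappa_j=-\alpha_j'$, one integration by parts on the closed curve (producing a boundary term with $|\alpha_j(L_j)-\alpha_j(0)|=2\pi$, the total turning of a simple closed curve), the identity $\gamma_j'=\nabla u_\eps^{\perp}/|\nabla u_\eps|$ along $U_j$, and the equipartition relation $\int_{-\delta/2}^{\delta/2}\eps q_\eps'(t)^2\,dt=c_0+o(1)$, one obtains, for every admissible $\varphi$ and up to an error exponentially small in $\eps$ and uniform in $\varphi$,
\[
\big|\tilde T_\eps^{\varphi}(u_\eps)\big|\,\le\,2\pi N\,+\,2\pi\,I_\eps[\varphi].
\]
Hence $A_\eps(\varphi,u_\eps)\ge(\eps^{-\gamma}-2\pi)\,I_\eps[\varphi]-2\pi N-o(1)\ge-2\pi N-o(1)$ as soon as $\eps^{-\gamma}>2\pi$, and, passing to the relaxed functional by lower semicontinuity, $\inf_\varphi A_\eps(\cdot,u_\eps)\ge-2\pi N-o(1)$.

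Next, a matching competitor. Put $s_j:=\sgn\int_{(\gamma_j)}\kappa_{\gamma_j}\in\{\pm1\}$ and let $\varphi_\eps$ be the $[-1,1]$–valued $BV$–function equal to the constant $s_j$ on a neighbourhood of $(\gamma_j)$ containing $\supp(\nabla u_\eps)\cap U_j$ and piecewise constant on the rest of $B_1(0)$ with jump set contained in $\{|\nabla u_\eps|=0\}$ — possible precisely because the tubes are separated — then approximate $\varphi_\eps$ weakly in $BV$ by $C^1$ maps whose gradients are supported in $\{|\nabla u_\eps|=0\}$. Along such competitors $I_\eps[\varphi_\eps]=0$, $\eps^{\gamma}\int|\nabla\varphi_\eps|=O(\eps^{\gamma})$, and by the tube–by–tube reduction above together with $\int_{(\gamma_j)}\kappa_{\gamma_j}=2\pi s_j$ one gets $\tilde T_\eps^{\varphi_\eps}(u_\eps)=\tfrac1{c_0}\sum_j s_j\int_{(\gamma_j)}\kappa_{\gamma_j}\cdot(c_0+o(1))=2\pi N+o(1)$, so $A_\eps(\varphi_\eps,u_\eps)\le-2\pi N+O(\eps^{\gamma})$. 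Combining with the previous step, $\inf_\varphi A_\eps(\cdot,u_\eps)=-2\pi N+o(1)$. Evaluating at the minimiser $\varphi[u_\eps]$ and using nonnegativity of the first two terms of $A_\eps$ gives $\tilde T_\eps(u_\eps)\ge-\overline{A_\eps}(\varphi[u_\eps],u_\eps)=2\pi N-o(1)$; while the refined inequality above, applied at $\varphi[u_\eps]$, yields $(\eps^{-\gamma}-2\pi)I_\eps[\varphi[u_\eps]]\le\overline{A_\eps}(\varphi[u_\eps],u_\eps)+2\pi N+o(1)=o(1)$, hence $I_\eps[\varphi[u_\eps]]=o(\eps^{\gamma})$ and therefore $\tilde T_\eps(u_\eps)\le2\pi N+2\pi I_\eps[\varphi[u_\eps]]+o(1)=2\pi N+o(1)$. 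Letting $\eps\to0$ proves the claim.

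The main obstacle I expect is the a priori bound of the second paragraph: the whole argument stands or falls on the estimate $|\tilde T_\eps^{\varphi}(u_\eps)|\le2\pi N+2\pi I_\eps[\varphi]$ being genuinely \emph{uniform in the competitor $\varphi$}, so one has to verify that the reduction to the one–curve integrand (the exact cancellation of the Jacobian $1\pm t\kappa_j$ against $H_\eps u_\eps$ on $\{|d|<\delta/2\}$, the exponential smallness of the contribution of $\{\delta/2<|d|<\delta\}$, and the control of the $\nabla\varphi$–term by $2\pi I_\eps[\varphi]$ as in \eqref{eq:lem-Ttil3}) survives with $\varphi$ an arbitrary $BV$ function bounded by $1$; only then is the lower bound on $\inf_\varphi A_\eps$ legitimate. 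The genuinely new point beyond Lemma~\ref{lem:Ttilde} is the competitor of the third paragraph, which is what forces the separation of the $(\gamma_j)$ — equivalently, the choice of $\delta$ below the minimal pairwise distance of the curves — so that the penalty $\eps^{-\gamma}I_\eps$ can be made to cost nothing and the $N$ components can be signed independently.
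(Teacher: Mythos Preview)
Your proposal is correct and follows the same approach the paper sketches in its one-sentence proof: repeat the estimates of Lemma~\ref{lem:Ttilde} on each of the $N$ pairwise disjoint tubular neighbourhoods, and use that jumps of $\varphi$ in the region $\{|\nabla u_\eps|=0\}$ between components contribute only at order $\eps^{\gamma}$ through the first term of $A_\eps$. Your version is considerably more explicit than the paper's: you spell out the uniform-in-$\varphi$ bound $|\tilde T_\eps^{\varphi}(u_\eps)|\le 2\pi N+2\pi I_\eps[\varphi]$, build the piecewise-constant competitor with signs $s_j$ on each tube, and then close a genuine squeezing argument for $\tilde T_\eps(u_\eps)$ by extracting $I_\eps[\varphi[u_\eps]]=o(\eps^{\gamma})$ from the near-optimality of the minimiser --- a step the paper leaves implicit even in Lemma~\ref{lem:Ttilde}.
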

\begin{proof}
Since jumps of $\varphi$  from $1$ to $-1$  in the space between two connected components of $\partial E$ are infitesimal of order $\eps^\gamma$, it is enough to repeat the proof of Lemma \ref{lem:Ttilde} for each connected component of $\partial E$.   
\end{proof}

\end{appendix}
%=======================
% bibliography
%=======================
\bibliography{cec}
\bibliographystyle{abbrv}
\end{document}